\title{Embedded Trefftz discontinuous Galerkin methods}%
\author{Christoph Lehrenfeld}
\author{Paul Stocker}
\address{Georg-August-Universit\"at, G\"ottingen, Germany}
\email{lehrenfeld@math.uni-goettingen.de}
\email{p.stocker@math.uni-goettingen.de}
\theoremstyle{plain}
\newtheorem{theorem}{Theorem}
\newtheorem{lemma}[theorem]{Lemma}
\theoremstyle{definition}
\newtheorem{remark}[theorem]{Remark}
\newcommand{\dofs}{\texttt{dofs}}
\newcommand{\ndofs}{\texttt{ndofs}}
\newcommand{\nnzes}{\texttt{nnzes}}
\newcommand{\inner}[1]{\langle #1 \rangle}
\newcommand\restr[2]{{ \left.\kern-\nulldelimiterspace #1 \vphantom{\big|} \right|_{#2} }}
\newcommand{\ee}{{\rm e}}
\DeclareMathOperator{\Div}{\mathrm{div}}
\DeclareMathOperator{\Id}{\mathrm{id}}
\DeclareMathOperator\Ker{ker}
\DeclareMathOperator\Range{range}
\DeclareMathOperator\dt{\frac{\partial}{\partial t}}
\newcommand{\calO}{\mathcal{O}}
\newcommand{\calG}{\mathcal{G}}
\newcommand{\calL}{\mathcal{L}}
\newcommand{\sst}{\;\text{s.t.}\;}
\newcommand{\IT}{\mathbb{T}}
\newcommand{\IN}{\mathbb{N}}
\newcommand{\IP}{\mathbb{P}}
\newcommand{\Th}{{\mathcal{T}_h}} 
\newcommand{\Fh}{\mathcal{F}_h} 
\newcommand{\dom}{\Omega} 
\newcommand{\nx}{n_\mathbf{x}} 
\newcommand{\Vhp}{V^p(\Th)}
\newcommand{\Vp}{V^p(K)}
\newcommand{\be}{\mathbf{e}}
\newcommand{\bQ}{\mathbf{Q}}
\newcommand{\bR}{\mathbf{R}}
\newcommand{\bI}{\mathbf{I}}
\newcommand{\bT}{\mathbf{T}}
\newcommand{\bW}{\mathbf{W}}
\newcommand{\bl}{\mathbf{l}}
\newcommand{\bM}{\mathbf{M}}
\newcommand{\bA}{\mathbf{A}}
\newcommand{\bU}{\mathbf{U}}
\newcommand{\bV}{\mathbf{V}}
\newcommand{\bu}{\mathbf{u}}
\newcommand{\bw}{\mathbf{w}}
\newcommand{\bb}{\mathbf{b}}
\newcommand{\bSig}{\mathbf{\Sigma}}
\newcommand{\bx}{{\mathbf x}}
\newcommand{\bj}{{\mathbf j}}
\newcommand{\bq}{{\mathbf q}}
\newcommand{\jump}[1]{[\![ #1 ]\!]}
\newcommand{\avg}[1]{\{\!\!\{#1\}\!\!\}}
\newcommand{\QT}{{\mathbb{Q\!T}}}
\newcommand{\mi}{{\boldsymbol{i}}}
\newcommand{\wt}{{(w,\bm \tau)}}
\newcommand{\DGmethod}{DG}
\newcommand{\ETmethod}{emb. Trefftz}
\newcommand{\Tmethod}{$\IT^p$}
\newcommand{\QTmethod}{$\QT^p$}
\newcommand{\HDGmethod}{HDG}
\useunder{\uline}{\ul}{}
\pgfplotsset{
    discard if not/.style 2 args={
        x filter/.append code={
            \edef\tempa{\thisrow{#1}}
            \edef\tempb{#2}
            \ifx\tempa\tempb
            \else
                
            \fi
        }
    }
}
\pgfplotsset{compat=1.16}
\pgfplotsset{tick label style={font=\small},label style={font=\small},legend style={font=\small},}
\pgfplotsset{ width=.49\linewidth}
\newcommand{\logLogSlopeTriangle}[5]
{

    \pgfplotsextra
    {
        \pgfkeysgetvalue{/pgfplots/xmin}{\xmin}
        \pgfkeysgetvalue{/pgfplots/xmax}{\xmax}
        \pgfkeysgetvalue{/pgfplots/ymin}{\ymin}
        \pgfkeysgetvalue{/pgfplots/ymax}{\ymax}

        \pgfmathsetmacro{\xArel}{#1}
        \pgfmathsetmacro{\yArel}{#3}
        \pgfmathsetmacro{\xBrel}{#1-#2}
        \pgfmathsetmacro{\yBrel}{\yArel}
        \pgfmathsetmacro{\xCrel}{\xArel}

        \pgfmathsetmacro{\lnxB}{\xmin*(1-(#1-#2))+\xmax*(#1-#2)} 
        \pgfmathsetmacro{\lnxA}{\xmin*(1-#1)+\xmax*#1} 
        \pgfmathsetmacro{\lnyA}{\ymin*(1-#3)+\ymax*#3} 
        \pgfmathsetmacro{\lnyC}{\lnyA-#4*(\lnxA-\lnxB)}
        \pgfmathsetmacro{\yCrel}{\lnyC-\ymin)/(\ymax-\ymin)} 

        \coordinate (A) at (rel axis cs:\xArel,\yArel);
        \coordinate (B) at (rel axis cs:\xBrel,\yBrel);
        \coordinate (C) at (rel axis cs:\xCrel,\yCrel);

        \draw[#5]   (A)-- node[pos=0.5,anchor=north] {}
                    (B)-- 
                    (C)-- node[pos=0.5,anchor=west] {#4}
                    cycle;
    }
}
\definecolor{maroon}{cmyk}{0, 0.87, 0.68, 0.32}
\definecolor{halfgray}{gray}{0.55}
\definecolor{ipython_frame}{RGB}{207, 207, 207}
\definecolor{ipython_bg}{RGB}{247, 247, 247}
\definecolor{ipython_red}{RGB}{186, 33, 33}
\definecolor{ipython_green}{RGB}{0, 128, 0}
\definecolor{ipython_cyan}{RGB}{64, 128, 128}
\definecolor{ipython_purple}{RGB}{170, 34, 255}
\lstdefinelanguage{iPython}{
    %
    morekeywords=[2]{abs,all,any,basestring,bin,bool,bytearray,callable,chr,classmethod,cmp,compile,complex,delattr,dict,dir,divmod,enumerate,eval,execfile,file,filter,float,format,frozenset,getattr,globals,hasattr,hash,help,hex,id,input,int,isinstance,issubclass,iter,len,list,locals,long,map,max,memoryview,min,next,object,oct,open,ord,pow,property,range,raw_input,reduce,reload,repr,reversed,round,set,setattr,slice,sorted,staticmethod,str,sum,super,tuple,type,unichr,unicode,vars,xrange,zip,apply,buffer,coerce,intern},%
    sensitive=true,%
    morecomment=[l]\#,%
    morestring=[b]',%
    morestring=[b]",%
    morestring=[s]{'''}{'''},
    morestring=[s]{"""}{"""},
    morestring=[s]{r'}{'},
    morestring=[s]{r"}{"},%
    morestring=[s]{r'''}{'''},%
    morestring=[s]{r"""}{"""},%
    morestring=[s]{u'}{'},
    morestring=[s]{u"}{"},%
    morestring=[s]{u'''}{'''},%
    morestring=[s]{u"""}{"""},%
    %
    literate=
    {á}{{\'a}}1 {é}{{\'e}}1 {í}{{\'i}}1 {ó}{{\'o}}1 {ú}{{\'u}}1
    {Á}{{\'A}}1 {É}{{\'E}}1 {Í}{{\'I}}1 {Ó}{{\'O}}1 {Ú}{{\'U}}1
    {à}{{\`a}}1 {è}{{\`e}}1 {ì}{{\`i}}1 {ò}{{\`o}}1 {ù}{{\`u}}1
    {À}{{\`A}}1 {È}{{\'E}}1 {Ì}{{\`I}}1 {Ò}{{\`O}}1 {Ù}{{\`U}}1
    {ä}{{\"a}}1 {ë}{{\"e}}1 {ï}{{\"i}}1 {ö}{{\"o}}1 {ü}{{\"u}}1
    {Ä}{{\"A}}1 {Ë}{{\"E}}1 {Ï}{{\"I}}1 {Ö}{{\"O}}1 {Ü}{{\"U}}1
    {â}{{\^a}}1 {ê}{{\^e}}1 {î}{{\^i}}1 {ô}{{\^o}}1 {û}{{\^u}}1
    {Â}{{\^A}}1 {Ê}{{\^E}}1 {Î}{{\^I}}1 {Ô}{{\^O}}1 {Û}{{\^U}}1
    {œ}{{\oe}}1 {Œ}{{\OE}}1 {æ}{{\ae}}1 {Æ}{{\AE}}1 {ß}{{\ss}}1
    {ç}{{\c c}}1 {Ç}{{\c C}}1 {ø}{{\o}}1 {å}{{\r a}}1 {Å}{{\r A}}1
    {€}{{\EUR}}1 {£}{{\pounds}}1
    {^}{{{\color{ipython_purple}\^{}}}}1
    {=}{{{\color{ipython_purple}=}}}1
    {+}{{{\color{ipython_purple}+}}}1
    {*}{{{\color{ipython_purple}$^\ast$}}}1
    {/}{{{\color{ipython_purple}/}}}1
    {+=}{{{+=}}}1
    {-=}{{{-=}}}1
    {*=}{{{$^\ast$=}}}1
    {/=}{{{/=}}}1,
    literate=
    *{-}{{{\color{ipython_purple}-}}}1
     {?}{{{\color{ipython_purple}?}}}1,
    identifierstyle=\color{black}\ttfamily,
    commentstyle=\color{ipython_cyan}\ttfamily,
    stringstyle=\color{ipython_red}\ttfamily,
    keepspaces=true,
    showspaces=false,
    showstringspaces=false,
    rulecolor=\color{ipython_frame},
    framexleftmargin=0mm,
    numbers=left,
    numberstyle=\tiny\color{halfgray},
    numbersep=1mm,
    xleftmargin=1mm,
    basicstyle=\scriptsize,
    keywordstyle=\color{ipython_green}\ttfamily,
}
\lstdefinestyle{trefftzy}{
    language=iPython,
    emptylines=1,
    breaklines=true,
    basicstyle=\footnotesize\ttfamily\color{black},    
    moredelim=**[is][\color{teal}]{<}{>},
    moredelim=**[is][\color{purple}]{'}{'},
}
\definecolor{pscol}{rgb}{0.8,0,0}
\definecolor{rvcol}{rgb}{1,0,0}
\newcommand{\rv}[1]{{#1}}
\begin{document}
\sloppy
\begin{abstract}
In Trefftz discontinuous Galerkin methods a partial differential equation is discretized using discontinuous shape functions that are chosen to be elementwise in the kernel of the corresponding differential operator. We propose a new variant, the embedded Trefftz discontinuous Galerkin method, which 
is the Galerkin projection of an underlying discontinuous Galerkin method onto a subspace of Trefftz-type.
The subspace can be described in a very general way and to obtain it no Trefftz functions have to be calculated explicitly, instead the corresponding embedding operator is constructed.
In the simplest cases the method recovers established Trefftz discontinuous Galerkin methods. But the approach allows to conveniently extend to general cases, including inhomogeneous sources and non-constant coefficient differential operators. 
We introduce the method, discuss implementational aspects and explore its potential on a set of standard PDE problems. Compared to standard discontinuous Galerkin methods we observe a severe reduction of the globally coupled unknowns in all considered cases, reducing the corresponding computing time significantly. Moreover, for the Helmholtz problem we even observe an improved accuracy similar to Trefftz discontinuous Galerkin methods based on plane waves.
\end{abstract}

\keywords{discontinuous Galerkin method, Trefftz finite elements, embedded Trefftz}

\subjclass{65M60, 41A10}

\maketitle

\section{Introduction}
In this manuscript we propose a novel numerical method closely related to Trefftz discontinuous Galerkin methods.
The main idea of Trefftz methods, originating from \cite{trefftz1926}, is to choose optimal discretization spaces that provide the same approximation quality as comparable discrete spaces with a significant reduction of the number of degrees of freedom (\ndofs). 
For an overview on Trefftz methods see \cite{LLHC08,Qin05,kk95}.

Polynomial Trefftz functions have been obtained for several linear partial differential operators with constant coefficients such as Laplace equation \cite{MiWi55a,MiWi55b}, acoustic wave equation \cite{zbMATH06677366,mope18,zbMATH02190564}, heat equation \cite{RoWi}, plate vibration and beam vibration equation \cite{macikag2011trefftz, al2008method}, and time-dependent Maxwell's equation \cite{kretzschmarphd}.
Efforts to generate Trefftz polynomials in a general case have been undertaken, see \cite{MiWi56,Ho}.
The idea of finding Trefftz polynomials via Taylor series, used in \cite{macig2005solution}, was extended in \cite{yang2020trefftz} to construct Trefftz-like polynomials in the case of non-linear elliptic PDEs possibly with smooth coefficients.
The method requires a Taylor expansion of the coefficients of the PDE to construct these polynomials via a recursive procedure for the coefficients of a Taylor polynomial. 

Discontinuous Galerkin (DG) schemes can be easily combined with Trefftz functions as the basis construction of elements decouples. 
Trefftz-DG schemes for Laplace equations have been analyzed in \cite{HMPS14,LiShu2006,LiShu2012}.
Different time-dependent problems have been discussed recently, such as 
wave problems in one space dimension \cite{SpaceTimeTDG,KSTW2014,PFT09},
the acoustic wave equation \cite{StockerSchoeberl,mope18,bgl2016},
elasto-acoustics \cite{bcds20},
a class of Friedrichs systems coming from linear transport \cite{buet2020trefftz,morel2018trefftz},
for time-dependent Maxwell's equation see \cite{kretzschmarphd,EKSW15},
and the linear Schr\"odinger equation \cite{tdgschroedinger}. 
Very popular applications of Trefftz methods are wave propagation problems in frequency domain. There, no polynomial Trefftz space exists and plane wave functions are used instead.
A DG scheme for Helmholtz equation has been presented and analyzed in \cite{cessenat1998application,ghp09,mps13,hmp11b,AndreaPhD}, for more information on Trefftz methods for Helmholtz equation see the recent survey \cite{TrefftzSurvey} and the references therein.
Plane waves have also been used for linear elasticity \cite{moiola2013plane} and time-harmonic Maxwell's equation \cite{HMP11,fure2020discontinuous,AndreaPhD}.
\rv{Apart from combining Trefftz spaces with DG schemes, harmonic polynomials and plane waves have also been combined with virtual element methods, see \cite{zbMATH06596743,zbMATH06996162,zbMATH07139237}}.

In the case of a differential operator with smooth coefficients the local solutions in a Trefftz space are not sufficient to provide high-order approximation. 
This can be circumvented by weakening the requirements of the Trefftz space, providing again a sufficiently large basis.
In \cite{qtrefftz} a quasi-Trefftz DG method is introduced and analyzed for the acoustic wave equation with smooth coefficients.
Plane waves have been generalized to work with a DG method for the Helmholtz equation with smooth coefficient in \cite{ImbertGeradDespres2014}.

Trefftz methods usually are applied to homogeneous problems (problems with no volume source term).
To treat inhomogeneous problems with a Trefftz approach, one needs to construct a particular solution. Then it is possible to apply Trefftz methods to solve for the difference.
In \cite{zbMATH02139293} the Poisson problem is treated in two separate ways, once a fundamental solutions are used as trial functions, while for the other approach radial basis functions are used to express the inhomogeneous part.
Another approach for Helmholtz and time-harmonic Maxwell's equation is presented in \cite{HuYuan18}, where a series of inhomogeneous local problems are solved using spectral elements to obtain local particular solutions.

\subsection{Main contributions and outline of the paper}
The embedded Trefftz method proposed here side-steps the explicit construction of a Trefftz space, instead we present a simple method of constructing a corresponding embedding. 
In its simplest form, the method proposed here can be seen as a convenient way to set up the linear system of a Trefftz DG discretization by means of a Galerkin projection of a standard DG method onto its Trefftz subspace. 
Instead of implementing Trefftz functions explicitly, the characterization of the Trefftz function space as the kernel of an associated differential operator is exploited to construct an embedding of a Trefftz subspace into the DG space in a very generic way. We denote this embedding as the \emph{Trefftz embedding}.
The construction requires only element-local operations of small matrices and is embarrassingly parallel. To compute the discrete kernel numerically several established methods exists, e.g. one can use QR factorizations, singular, or eigenvalue decompositions. We denote this approach as \emph{embedded Trefftz DG method}.

When the embedded Trefftz DG approach is used to implement an existing Trefftz DG method one trades the convenient setup for slightly larger costs in the setup of the linear system. 
However, the strongest feature of the embedded Trefftz DG approach is the following:
The generic way the linear systems are set up allows to apply the concept of Trefftz methods beyond their previous limitations.  Two of these limitations that can be conveniently exceeded are:
\begin{itemize}
\item In many interesting applications a standard Trefftz space is either not polynomial or unfeasible to set up at all limiting the application of Trefftz DG methods for these cases.
\item If a standard Trefftz DG subspace exists, still only homogeneous equations (equations with no volume source terms) are easily dealt with. Inhomogeneous equations are difficult to deal with for standard Trefftz DG methods.
\end{itemize}


In cases where a polynomial Trefftz space is not embedded in the discretization space or when a Trefftz basis is unknown, the proposed embedded Trefftz DG method can still be applied.
This is done by slightly weakening the properties of the Trefftz space we are looking to embed.
With this we are able to treat for example the Helmholtz equation and PDEs with piecewise smooth coefficients, e.g. the acoustic wave equation.

The embedded Trefftz DG method offers a more generic way of obtaining local particular solutions
exploiting the underlying DG discretization. 
Computing element-wise particular solutions in the DG space can be done with little computational effort and then be used to homogenize the DG system.

The paper continuous as follows: In \cref{sec:method} we present the method, recovering traditional Trefftz DG methods and its extensions to the case of weaker Trefftz spaces and inhomogeneous equations.
In \cref{sec:impl} we address implementational aspects and turn to applications of the approach to a set of PDE problems in \cref{sec:num}.
We present numerical examples for Laplace equation, Poisson equation, acoustic wave equation with piecewise constant and also with smooth coefficient, Helmholtz equation, and a linear transport equation.
\rv{In \cref{sec:compareschemes} we compare the Trefftz DG method to another popular acceleration technique for DG methods, the Hybrid DG method.}
The method was implemented using \texttt{NGSolve} and \texttt{NGSTrefftz}\footnote{see \url{https://ngsolve.org/} and \url{https://github.com/PaulSt/NGSTrefftz}.}.
Conclusion and outlook are presented in \cref{sec:outlook}.

\section{The method}\label{sec:method}
We introduce the method in several steps. After some preliminaries, we first introduce the basic concepts of the method in \cref{sec:method1}.
Here, we will assume that a suitable Trefftz space exists, and is embedded in the discrete DG space $\Vhp$. Here, we understand as the Trefftz space the subspace of functions that locally fulfill the PDE pointwise. 
This case only applies to a small set of problems where (polynomial) Trefftz spaces exist, like the Laplace equation and the acoustic wave equation with piece-wise constant wavespeed. 
We then turn to cases where a Trefftz space is not embedded in the discrete space. In this case we consider a weak Trefftz space instead. This is presented in \cref{sec:method2}. 
How inhomogeneous equations can easily be dealt with is explained in \cref{sec:method3} resulting in a method that allows to treat a large class of PDE problems in a uniform way. 

\subsection{Preliminaries}\label{sec:prelim}
We consider a linear partial differential operator $\calL$ and a corresponding boundary (and possibly initial) value problem 
\begin{equation} \label{eq:basicpde}
   \calL u = f \text{ in } \Omega \subset \mathbb{R}^d
\end{equation}
supplemented by suitable boundary conditions and
paired with a suitable discontinuous Galerkin formulation 
\begin{equation}\label{eq:basicdg}
    \text{Find }u_{hp}\in \Vhp,~\text{ s.t. }
    a_h(u_{hp},v_{hp})=\ell(v_{hp})\qquad \forall v_{hp}\in \Vhp.
\end{equation}
The DG formulation is set on a mesh $\Th$ of the domain $\dom$, and we assume that the space $\Vhp$ is the product of local spaces $\Vp$ for $K\in\Th$ where the index $p$ indicates a polynomial degree, e.g. $\Vp = \mathcal{P}^p(K)$ the space of polynomials up to degree $p$. 
We note that space-time DG formulations are allowed in this setting, so that $d\in \mathbb{N}$ is either the dimension of the space or space-time domain.

\subsection{Embedded Trefftz method}\label{sec:method1}
At first, in this subsection, we make the following simplifying assumptions:
Firstly, we assume $f=0$. Second, we assume that the differential operator has the form $\calL = \sum_{l=1}^d \alpha_l D_l^{\beta_l}$
for $\alpha_l \in \mathbb{R}$ 
and $\beta_l \in \mathbb{N}$ and that all mesh elements are straight. Valid examples are $\calL = -\Delta$, $\calL = b \cdot \nabla$ for $b \in \mathbb{R}^d$, $\calL = \partial_t + b \cdot \nabla$, but not $\calL = -\Delta \pm \Id$, $\calL = -\Delta + b \cdot \nabla$ or $\calL = -\Div( \alpha \nabla \cdot)$ for a non-constant field $\alpha$. This assumption ensures that the subspace of $\Vp$ that we define next is sufficiently large to allow for reasonable approximations of the solution.   

We define the Trefftz space 
\begin{align} \label{eq:trefftzspace}
    \IT^p(\Th)=\{v\in \Vhp,\ \calL v=0 \text{ on each } K\in\Th\}\subset \Vhp.
\end{align}
The Trefftz version of \eqref{eq:basicdg} is its Galerkin projection to $\IT^p(\Th)$:
\begin{equation}\label{eq:trefftzdg}
    \text{Find }u_{\IT}\in \IT^p(\Th),~\text{ s.t. }
    a_h(u_{\IT},v_{\IT})=\ell(v_{\IT})\qquad \forall v_{\IT}\in \IT^p(\Th).
\end{equation}

In contrast to previous works on Trefftz methods, we do not construct the space $\IT^p$ from scratch based on accordingly defined basis function, but rather aim to build an embedding operator for the Trefftz space into the underlying piecewise polynomial space $\Vp$. We call this the \emph{embedded Trefftz DG method}.

\subsubsection*{Construction of the embedding operator}

Let $\{\phi_i\}$ be the set of basis functions of $\Vhp$, $N =\operatorname{dim}(\Vhp)$ and $\calG: \mathbb{R}^n \to \Vhp$ be the Galerkin isomorphism, $\calG(\bx) = \sum_{i=1}^N \bx_i \phi_i$. With $\be_i,~i=1,..,N$ the canonical unit vectors in $\mathbb{R}^n$, so that $\calG(\be_i) = \phi_i$, we define the following matrices and vector for $i,j = 1,\dots, N$  
\begin{subequations}
\begin{align} \label{def:matrices}
    (\bA)_{ij}&=a_h(\calG(\be_j),\calG(\be_i))=a_h(\phi_j,\phi_i),
     \qquad 
    (\bl)_i = \ell(\calG(\be_i)) = \ell(\phi_i) \\
    (\bW)_{ij}&=\inner{\calL\phi_j,\calL\phi_i}_{0,h}, \label{def:W}
\end{align}
\end{subequations}
where $\inner{\cdot,\cdot}_{0,h}=\sum_{K\in\Th}\inner{\cdot,\cdot}_K$ is the element-wise $L^2$-inner product.
We are interested in the kernel of $\calL$ (in an element-wise and pointwise sense) in $\Vhp$ as this is the part where the Trefftz DG method operates. We note that 
\begin{equation}
    \Ker(\calL) = \calG(\Ker(\bW))
\end{equation}    
and hence are looking for a basis of $\Ker(\bW)$. 
As $\bW$ is block diagonal, with blocks corresponding to the elements $K \in \Th$ we will construct the basis element by element which implies that the following calculations can be done in parallel.

For a fixed $K \in \Th$ let $\bW_K \in \mathbb{R}^{N_{\!K}\!\times\! N_{\!K}}$ be the corresponding block with $N_K = \dim(\Vp)$. 
For the dimension of the kernel we have $M_K := \dim(\Ker(\calL)) = N_K - L_K$ with $L_K = \dim(\Range(\calL))$ where (for the case of differential operator $\calL$ in the assumed form) $L_K$ is typically known.
Now, we can determine the kernel of $\bW_K$ and collect a set of orthogonal basis vectors in a matrix  $\bT_K\in\mathbb{R}^{N_{\!K}\!\times\! M_{\!K}}$ so that $\ker(\bW_K) = \bT_K \cdot \mathbb{R}^{M_K}$. The kernel matrix $\bT_K$ can be determined  numerically, e.g. by a QR decomposition or a singular value decomposition, we discuss this in more depth in \cref{app:determinekernel}.

Composing the individual element matrices $\bT_K$ into a block matrix $\bT\in\mathbb{R}^{N \times M}$ with $M = \dim(\Ker(\bW)) = \sum_{T\in\Th} M_T$
we easily obtain the characterization of the kernel as $\Ker(\bW) = \bT \cdot \mathbb{R}^{M}$. Further note that there holds $\bT^T \bT = \bI_{M \times M}$.

We define the Galerkin isomorphism between $\mathbb{R}^M$ and $\IT^p(\Th)$ as $\calG_\IT: \mathbb{R}^M \to \IT^p(\Th)$, $\bx \mapsto \calG(\bT \bx)$ and denote a discrete operator corresponding to $\bT$ as the \emph{Trefftz embedding} $T: \IT^p(\Th) \to \Vhp, v_h \mapsto \calG (\bT \calG_{\IT}^{-1}(v_h))$.

To setup the linear systems corresponding to \eqref{eq:trefftzdg} we first assemble $\bA$, $\bl$ and $\bT$ and arrive at: Find $\bu_\IT (=\calG_\IT^{-1}(u_\IT))$ so that 
\begin{equation} \label{eq:trefftzlinearsys}
   \bT^T\bA\bT ~ \bu_\IT = \bT^T \bl.
\end{equation}

The resulting linear system is well controlled in terms of its conditioning.
\begin{lemma}[Conditioning of the embedded Trefftz method]\label{lem:cond}
    The spectral relative condition number of the embedded Trefftz DG method is bounded by the corresponding condition of the DG method, 
   \begin{equation}
    \kappa_2(\bT^T \bA \bT) \leq  \kappa_2(\bA).
   \end{equation}
\end{lemma}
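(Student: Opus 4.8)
The plan is to read $\kappa_2$ as the spectral condition number and to argue in the symmetric positive definite setting, where $\kappa_2(\bA)=\lambda_{\max}(\bA)/\lambda_{\min}(\bA)$ is governed by the eigenvalues; this is the relevant case for symmetric coercive DG forms such as an interior penalty discretization of $\calL=-\Delta$. I would first observe that $\mathbf{B}:=\bT^T\bA\bT$ is again symmetric, and in fact positive definite: for $\bx\neq 0$ one has $\bx^T\mathbf{B}\bx=(\bT\bx)^T\bA(\bT\bx)>0$, because $\bT^T\bT=\bI_{M\times M}$ forces $\bT\bx\neq 0$. Hence $\kappa_2(\mathbf{B})=\lambda_{\max}(\mathbf{B})/\lambda_{\min}(\mathbf{B})$ is well defined, and it suffices to establish the two eigenvalue bounds $\lambda_{\max}(\mathbf{B})\le\lambda_{\max}(\bA)$ and $\lambda_{\min}(\mathbf{B})\ge\lambda_{\min}(\bA)$.

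The key step is to combine the Courant--Fischer characterization of the extreme eigenvalues with the isometry property of $\bT$. For $\bx\in\IR^M\mnull$ set $\mathbf{y}=\bT\bx\in\Range(\bT)\subseteq\IR^N$; since $\bT^T\bT=\bI$ we get $\norm{\mathbf{y}}^2=\bx^T\bT^T\bT\bx=\norm{\bx}^2$, so that the Rayleigh quotient of $\mathbf{B}$ at $\bx$ coincides with that of $\bA$ at $\mathbf{y}$, i.e. $\bx^T\mathbf{B}\bx/(\bx^T\bx)=\mathbf{y}^T\bA\mathbf{y}/(\mathbf{y}^T\mathbf{y})$. As $\bx$ runs over $\IR^M\mnull$, the image $\mathbf{y}$ runs over the subspace $\Range(\bT)$, a subset of $\IR^N\mnull$; maximizing over the smaller set can only lower the supremum, giving $\lambda_{\max}(\mathbf{B})\le\lambda_{\max}(\bA)$, and minimizing can only raise the infimum, giving $\lambda_{\min}(\mathbf{B})\ge\lambda_{\min}(\bA)$. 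Dividing the two inequalities yields $\kappa_2(\mathbf{B})\le\kappa_2(\bA)$. These two bounds are precisely Cauchy's interlacing inequalities for the compression of $\bA$ to $\Range(\bT)$.

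I expect the lower bound $\lambda_{\min}(\mathbf{B})\ge\lambda_{\min}(\bA)$ to be the genuine content, and the step where symmetry together with positive definiteness is essential. The upper bound, by contrast, survives for an arbitrary (possibly non-symmetric) $\bA$ in the singular-value reading $\kappa_2(\bA)=\sigma_{\max}(\bA)/\sigma_{\min}(\bA)$, since submultiplicativity with $\norm{\bT}_2=\norm{\bT^T}_2=1$ gives $\sigma_{\max}(\mathbf{B})\le\sigma_{\max}(\bA)$. The lower bound, however, can fail without the SPD structure, because $\bT^T$ may annihilate the component of $\bA\mathbf{y}$ orthogonal to $\Range(\bT)$ and thereby shrink the smallest singular value; a one-line witness is $\bA=\operatorname{diag}(1,-1)$ compressed along $\bT=\tfrac1{\sqrt2}(1,1)^T$, for which $\mathbf{B}=0$. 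For this reason I would state and prove the lemma under the hypothesis that makes $\bA$ symmetric positive definite, relying on the Rayleigh-quotient argument above, rather than claim the eigenvalue bound directly for general (non-symmetric) DG forms.
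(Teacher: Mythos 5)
Your proposal is correct and takes essentially the same route as the paper, whose entire proof is the one-line observation that the columns of $\bT$ are orthogonal; your Rayleigh-quotient/interlacing argument is simply the worked-out version of that remark. Your additional caveat --- that the eigenvalue bound genuinely needs $\bA$ symmetric positive definite, witnessed by $\bA=\operatorname{diag}(1,-1)$ compressed to $\mathbf{B}=0$ --- is a real point the paper's terse proof glosses over, since the lemma is stated without an SPD hypothesis and several of the DG forms in the paper (upwind transport, Helmholtz) are not SPD.
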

\begin{proof}
    By construction of $\bT$ all its column vectors are orthogonal.     
\end{proof}

\begin{remark}[Matrix representing the kernel of $\calL$]
There are other possibilities to characterize the kernel of $\calL$ in $\Vp$. Here, we chose the specific form as it most obviously displays that 
$\bu\in\Ker{\bW}$ is equivalent to $\calL \calG (\bu) = 0$ pointwise as there has to hold $\inner{\calL \calG (\bu), \calL \calG (\bu) }_{0,h}=0$. 
In \cref{sec:method2} we will replace $\bW$ with a proper generalization. 
\end{remark}
\begin{remark}[Degrees of freedom]
    One major advantage of the Trefftz method is the reduction of the number of (globally coupled) degrees of freedom (\ndofs) without harming the approximation too much. Let us first discuss the reduction on an example. Assume $\calL$ is a second order operator, say $-\Delta$, the unknown field is scalar and the mesh is triangular. Then $N_K = {(p+1)(p+2)}/{2}$ and $-\Delta \Vp = V^{p-2}(K)$ and hence $L_K = {(p-1)p}/{2}$ yields $M_K = 2p+1$. This holds for every element and similar calculations can be made for different differential operators and vectorial problems. 
    In general, the Trefftz method decreases \ndofs~from $\mathcal{O}(p^d)$ to $\mathcal{O}(p^{d-1})$ and is hence especially attractive for higher order methods. In the lowest order case $p=0$ or also $p=1$ (depending on the differential operator) one often has $\IT^p(\Th) = \Vhp$.
    A similar reduction of \ndofs~from $\mathcal{O}(p^d)$ to $\mathcal{O}(p^{d-1})$ in a discontinuous Galerkin setting is also achieved in methods that associate \dofs~primarily to facets and rely in some way on static condensation such as Hybrid DG methods \cite{cockburn2016static,cockburn2009unified} or Hybrid High order (HHO) methods \cite{di2016review}. We discuss the comparison between these approaches in more detail in \cref{sec:compareschemes}.
\end{remark}
\begin{remark}[Uniqueness of the Trefftz embedding]
    We notice that while the embedding $T:\IT^p(\Th)\to V^p(\Th)$ is unique the matrix $\bT$ and hence, the basis for $\IT^p(\Th)$ resulting from the previous procedure is not. 
\end{remark}
\begin{remark}[Volume integrals] \label{rem:volints}  
    One advantage of Trefftz DG methods that is sometimes advertised is that volume integrals that involve $\calL u$ (possibly after partial integration) can be removed during assembly as $\calL u = 0$ on the Trefftz space. 
This is also possible when using the embedded Trefftz method, see also \cref{rem:qtrefftzterms}.
\end{remark}

\subsection{Weak Trefftz embedding}\label{sec:method2}
In the previous section we restricted the differential operator to have a special form, here we aim to lift some of the restrictions, specifically we want to treat non-constant coefficients and mixed differential orders also including $\Id$.
In these situations the Trefftz space as defined in \eqref{eq:trefftzspace} will typically much too small, i.e. the requirement $\calL v = 0$ \emph{on the polynomial spaces} may restrict the discrete solution too much leading to a version of \emph{locking}.

To circumvent this problem, we weaken our condition in the Trefftz space. We introduce a projection $\Pi$ that is yet to be defined and define the \emph{weak Trefftz space} and the \emph{embedded weak Trefftz DG method}:
\begin{subequations}
\begin{align}
    \text{Find }u_{\IT}\in \IT^p(\Th)&,~\text{ s.t. }
    a_h(u_{\IT},v_{\IT})=\ell(v_{\IT})\qquad \forall v_{\IT}\in \IT^p(\Th)\quad \text{ with } \\
    \IT^p(\Th)&=\{v\in \Vhp,\ \Pi \calL v=0\text{ on each } K\in\Th\}. \label{eq:weakTspace}
\end{align}
\end{subequations} 
This replaces the pointwise condition $\calL v = 0$ with something that is potentially weaker. 
The projection $\Pi$ will be designed to ensure that the resulting weak Trefftz space is reasonably large to allow for proper approximations of the PDE solution. 

A projection that has proven fruitful is given by 
\begin{equation}\label{eq:weakTproj}
    \Pi: L^2(\dom) \to V^\bq(\Th), \text{ s.t. } \inner{\Pi w, v}_{0,h} = \inner{w, v}_{0,h} ~ \forall~ v \in V^\bq(\Th).
\end{equation}
The multi-index $\bq\in\IN^d$ on $V^\bq$ denotes the polynomial space of maximal order $\bq=(q_1,\dots,q_d)$ in each variable $(x_1,\dots,x_d)$.
(If all entries of the multi-index are equal, we will continue to write $V^q$.)
Let us now consider the differential operator $\calL = \sum_{\bj\in\IN^d} \alpha_\bj D^{\bj}$ with $\alpha_\bj \in C(\Th)$.
Then we choose $q_i$ equal to the largest appearing differential order, i.e. $q_i=p-\max_{\alpha_\bj\not\equiv 0} j_i,\ i=1,\dots,d$. 

With these definitions, the analogue to $\bW$ from \cref{sec:method1}, i.e. the matrix that can be used to numerically extract the weak Trefftz space, becomes for $j=1,...,N$ and $i=1,...,\tilde{N} =\dim V^{\bq}( \mathcal{T}_h )$ and $\{\psi_i\}$ a basis for $V^{\bq}( \mathcal{T}_h )$
\begin{equation} \label{def:W2} 
    (\bW)_{ij} =\inner{\calL \phi_j,\psi_i}.
\end{equation}
In \cref{sec:implW} we present a slightly more convenient version (with the same kernel) that we also used in the numerical examples in this manuscript.

\begin{remark}[Consistency with \cref{sec:method1}]
    We recover the Trefftz method from \cref{sec:method1} if we choose $V^\bq=\calL V^{p}$.
    For instance for $\calL = -\Delta$ we can choose $V^\bq=\Delta V^p=V^{p-2}$.
    \rv{From the constant coefficient case treated in \cref{sec:method1}, it becomes clear that smaller values in $\bq$ would make the space larger than the Trefftz space, resulting in a sub-optimal reduction of the space.}
\end{remark}

\begin{remark}[Quasi-Trefftz]  \label{rem:qtrefftz}
    In \cite{qtrefftz} weak Trefftz spaces have been used to treat the acoustic wave equation with smooth coefficients, we recall the quasi-Trefftz spaces used there in \eqref{eq:QT}.
    We recover this quasi-Trefftz method if we replace $\Pi$ in \eqref{eq:weakTspace} with a Taylor polynomial expansion of order $p-1$ in the element center. 
\end{remark}

\subsection{Inhomogeneous PDEs} \label{sec:method3}
The Trefftz methods discussed in \cref{sec:method2} and \cref{sec:method1} as well as those known in the literature are designed for the homogeneous problem $\calL u = 0$ only and hence are not able to solve for problems with non-zero source term. 
A simple adjustment however allows to deal also with the inhomogeneous case for the embedded Trefftz DG method. 

In this section we treat equations of the form
\begin{align*}
   &\calL u = f \text{ in } \Omega
\end{align*}
for $f\in L^2(\Omega)$,  
supplemented by suitable boundary conditions
for which a suitable DG discretizations is given in  the form 
\begin{equation}\label{eq:basicdgwrhs}
    \text{Find }u_{hp}\in \Vhp,~\text{ s.t. }
    a_h(u_{hp},v_{hp})=\ell(v_{hp}) = g(v_{hp})+\inner{f,v_{hp}}_{0,h}   \qquad \forall v_{hp}\in \Vhp.
\end{equation}
where the linear form $g(\cdot)$ corresponds to the weak imposition of boundary conditions.

On the continuous level of the PDE we can introduce an affine shift and decompose the solution to $\calL u = f$ with $u = u_0 + u_f$.
Here $u_f$ is a particular solution to $\calL u_f = f$ and $u_0$ (uniquely) solves $\calL u_0 = 0$ (with boundary data depending on $u_f$).
As the embedded Trefftz (or weak Trefftz) method is based on an underlying DG discretization we are able to construct a reasonable particular solution in the DG space $\Vhp$ and apply the same homogenization strategy. 

We then write
$u_h = u_{\IT}  + u_{h,f}$ for $u_{\IT}\in\IT^p(\Th)$ and $u_{h,f}\in\Vhp$. 
$u_{h,f}$ is a (non-unique) particular solution to $\Pi \calL u_{h,f} = \Pi f$ that we can compute in an element-local fashion, see \cref{app:partsolutions}.
For the existence of a particular solution we require that $\Pi \calL:\Vhp \to V^\bq (\Th )$ is surjective. 
This essentially depends on the choice of $\calL$, $p$ and $\bq$.
It always holds true in the setting of \cref{sec:method1}. In the remainder we assume that a particular solution exists. 

For $u_{h,f}$ a particular solution, we are looking for a solution $u_h\in\IT^p(\Th) + u_{h,f}$ so that
\begin{equation} \label{eq:inhom}
    a_h(u_{h},v_{\IT})
 =\ell(v_{\IT}) ~~ \forall~ v_{\IT}\in\IT^p(\Th).
\end{equation}
After homogenization this means that we are looking for
$u_{\IT}\in\IT^p(\Th)$ that (uniquely) solves 
\begin{equation}
    a_h(u_{\IT},v_{\IT})
 =\ell(v_{\IT}) - a_h(u_{h,f},v_{\IT}) ~~ \forall~ v_{\IT}\in\IT^p(\Th).
\end{equation}
This translates to the solution of the linear system 
\begin{align}
    \bT^T\bA\bT \bu_{\IT}  = \bT^T (\bl-\bA\bu_f).
\end{align}
\begin{remark}[Degenerate cases] 
    Let us briefly discuss two extreme cases of the Trefftz embedding. If $\mathcal{L} =\operatorname{id}$, then $\IT^p(\Th) = \{0\}$ and the discrete solution of $\mathcal{L} u = u = f$ will effectively be computed when determining the - in this case unique - particular solution $u_{h,f}$. The other extreme case is $\mathcal{L} = 0$ for which we recover $\IT^p(\Th) =\Vhp$. 
\end{remark}
Assuming a coercive problem and a DG formulation that provides good discretization properties, on the embedded (weak) Trefftz DG subspace we can still apply all essential analysis tools to obtain a C\'ea-type result: 
\begin{lemma}\label{lem:cea}
    Let $u\in V(\dom)$ be a weak solution to the PDE problem under consideration and $u_h \in \IT^p(\Th) + u_{h,f}$ be the solution to \eqref{eq:inhom} and $\| \cdot \|_{h}$ be a suitable norm on $\Vhp\cup V(\dom)$. 
    Assume that problem \eqref{eq:basicdg} is well-posed, specifically $a_h$ and $\ell$ are continuous with respect to $\|\cdot \|_{h}$ on $\Vhp\cup V(\dom)$ and $a_h$ is coercive (with respect to $\|\cdot \|_{h}$) on $\Vhp$.
Then 
\begin{align*}
    \|u-u_{h}\|_{h}& \lesssim \inf_{\substack{v_h\in\Vhp\\ \Pi\calL v_h=\Pi f}} \|u-v_h\|_{h} 
\end{align*}
\end{lemma}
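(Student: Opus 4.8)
This is a Céa-type quasi-optimality statement, and the plan is to run the standard Céa argument, adapted to the \emph{affine} discrete trial space and to the nonconforming DG setting. The first step I would take is to make the affine structure explicit: the trial set $\IT^p(\Th) + u_{h,f}$ is an affine subspace of $\Vhp$, and for any two of its members their difference lies in the \emph{linear} space $\IT^p(\Th)$. Moreover I would identify this affine set with the constraint set appearing in the infimum, namely $\{v_h \in \Vhp : \Pi \calL v_h = \Pi f\}$: indeed, if $v_h = v_\IT + u_{h,f}$ with $\Pi\calL v_\IT = 0$ then $\Pi\calL v_h = \Pi\calL u_{h,f} = \Pi f$, and conversely $\Pi\calL v_h = \Pi f$ forces $v_h - u_{h,f}\in\IT^p(\Th)$. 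This reduces the claim to proving $\|u - u_h\|_h \lesssim \|u - w_h\|_h$ for an \emph{arbitrary} $w_h$ in this affine set, after which taking the infimum gives the result.

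With that reduction in place, I would assemble the three standard ingredients. The structural one is Galerkin orthogonality: since the underlying DG method is consistent, the exact weak solution $u \in V(\dom)$ satisfies $a_h(u, v_h) = \ell(v_h)$ for all $v_h \in \Vhp$; subtracting the discrete equation \eqref{eq:inhom} tested against $v_\IT \in \IT^p(\Th)$ yields $a_h(u - u_h, v_\IT) = 0$. The second ingredient is coercivity: the hypothesis gives coercivity of $a_h$ on $\Vhp$, and since $\IT^p(\Th)\subset\Vhp$ the same constant applies on the Trefftz subspace. The third is continuity of $a_h$ and $\ell$ on $\Vhp \cup V(\dom)$, which is precisely what makes the mixed term $a_h(u - w_h, v_\IT)$ (with $u$ generally \emph{not} in $\Vhp$) well defined and bounded.

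The core chain is then routine. Fix $w_h$ in the affine set and put $v_\IT := u_h - w_h \in \IT^p(\Th)$. Coercivity gives $\|u_h - w_h\|_h^2 \lesssim a_h(u_h - w_h, v_\IT)$; inserting Galerkin orthogonality to swap $u_h$ for $u$ yields $a_h(u_h - w_h, v_\IT) = a_h(u - w_h, v_\IT)$; continuity bounds this by $\|u - w_h\|_h \, \|v_\IT\|_h = \|u - w_h\|_h \, \|u_h - w_h\|_h$. Cancelling one factor of $\|u_h - w_h\|_h$ and applying the triangle inequality $\|u - u_h\|_h \le \|u - w_h\|_h + \|w_h - u_h\|_h$ gives $\|u - u_h\|_h \lesssim \|u - w_h\|_h$, and passing to the infimum over $w_h$ in the affine set completes the proof.

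The step I expect to be the main obstacle is not any of the inequalities, which are entirely classical, but rather the careful handling of consistency in the nonconforming, inhomogeneous setting. Concretely: the exact solution $u$ need not lie in $\Vhp$, so $a_h(u,\cdot)$ must be understood through the extension of $a_h$ to $\Vhp \cup V(\dom)$, and Galerkin orthogonality is \emph{not} among the explicitly listed hypotheses — it must be supplied as a consistency property of the DG formulation ("good discretization properties"). I would therefore state consistency as an assumption and make precise that the continuity hypothesis is exactly what renders the cross terms meaningful. A secondary point to verify is that the affine set is nonempty, i.e.\ that a particular solution $u_{h,f}$ exists; this is guaranteed by the earlier surjectivity assumption on $\Pi\calL : \Vhp \to V^\bq(\Th)$.
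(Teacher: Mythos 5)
Your proof is correct and follows essentially the same route as the paper: identify the affine trial set with the constraint set $\{v_h\in\Vhp : \Pi\calL v_h=\Pi f\}$, apply coercivity on $\IT^p(\Th)$, insert Galerkin orthogonality $a_h(u-u_h,v_\IT)=0$, bound by continuity, cancel a factor, and finish with the triangle inequality and the infimum. Your observation that consistency (Galerkin orthogonality) is used but not explicitly listed among the hypotheses is accurate — the paper likewise invokes it only implicitly under the phrase ``good discretization properties'' — so making it an explicit assumption, as you propose, is a reasonable tightening rather than a deviation.
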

\begin{proof}
    Let $v_h\in\IT^p(\Th)+u_{h,f}=\Vhp\cap\{v\sst \Pi\calL v=\Pi f\}$, and $w_h=u_{h}-v_h\in\IT^p(\Th)$. Then there holds \vspace*{-0.5cm}
\begin{align*}
    \|u_h-v_{h}\|_{h}^2 \lesssim &\ a_h(u-v_h,w_h) + \overbrace{a_h(u_h-u,w_h)}^{=0}\\
                          \lesssim &\ \|u-v_h\|_{h}\|u_h-v_h\|_{h} 
\end{align*}
where we used coercivity and continuity and that $a_h(u-u_h,v)=0\ \forall v\in\IT^p(\Th)$.
Hence $\|u_h-v_h\|_{h}\lesssim \|u-v_h\|_{h}$. 
Adding a triangle inequality and taking the infimum yields the result.
\end{proof}

The crucial question is of course on the approximation quality of the (affinely shifted) weak Trefftz space. We leave that open for future research.

\section{Implementational aspects}\label{sec:impl}
Let us consider several implementational aspects in this separate section. 
First, 
in \cref{sec:implW} we discuss how to set up a version of $\bW$ - the matrix that characterizes the (weak) Trefftz subspace - 
in a more convenient way than what directly follows from \cref{sec:method2}. 
Next, we discuss how the dimension of the (weak) Trefftz subspace can be obtained in an implementation.
Afterwards, we present the algorithmic structure of the embedded Trefftz DG method in comparison to a standard DG method.
We discuss how to numerically compute the kernel of the matrix $\bW$ and the particular solutions in \cref{app:determinekernel,app:partsolutions}. 
In \cref{sec:complexity} we discuss the computational complexity of the embedded Trefftz DG method.

\subsection{Characterization and implementation of the (weak) Trefftz DG space} 
\label{sec:implW}  
Involving a new space $V^\bq (\mathcal{T}_h)$ in the implementation and assembling of the matrix $\bW_K$ as used in \eqref{def:W2} may occur somewhat unneccessarily cumbersome. In the following, we propose a different but equivalent formulation that is more convenient and is also applied in the numerical examples later. 
Instead of using $V^\bq (\mathcal{T}_h)$ we again use the finite element space $\Vhp$, and find a differential operator $\tilde\calL$ with $\tilde\calL: \Vhp \to V^\bq (\mathcal{T}_h)$ surjective.
The operator $\tilde\calL$ is a modification of $\calL$ which only keeps the highest order derivatives in each direction and has only constant coefficients. In the simplest cases, i.e. in the setting of \cref{sec:method1} we simply choose $\tilde\calL = \calL$. Otherwise, in the general setting of \cref{sec:method2}, for example for the Helmholtz equation with variable wavespeed, with the operator $\calL=-\Id-\omega(\bx)\Delta$, we choose $\tilde\calL=\Delta$ (which would correspond to $\bq_i = p-2$). 

This way, we can re-define the matrix $\bW$ from \eqref{def:W2} for the general case as 
\begin{align} \label{def:W3}
    (\bW)_{ij}&=\inner{\calL\phi_j,\tilde\calL\phi_i}_{0,h}. 
\end{align}
Note that this matrix $\bW$ has dimension $N\times N$ as in the original definition \eqref{def:W} in the setting of \cref{sec:method1}.

\subsection{Dimension of the (weak) Trefftz subspace} \label{sec:dimofk}
On each element $K\in \mathcal{T}_h$ the dimension of the $\ker \Pi\calL$, $M_K$, can be computed as $N_K - \dim (\Range{\Pi\calL} )$ also for the (weak) Trefftz spaces. In most cases this expression could easily be precomputed and be given to the algorithm for the numerical kernel extraction. Alternative - and this is a bit more convenient as it requires less user input - we can also try to read that information from the matrix directly. Next, we explain how this can be achieved and notice that this approach has been used in all numerical examples below.

When numerically computing the kernel of a matrix, we use a QR, singular value or eigenvalue decomposition. In all these factorizations the diagonal of one of these factors (e.g. the diagonal matrix of the SVD or the triangular matrix of the QR decomposition) would have $M_K$ zeros assuming exact arithmetics. 
Due to inexact computer arithmetics this will not be the case exactly and hence we use a truncation parameter $\varepsilon>0$ to determine which values are considered as (numerical) zeros.
A numerical study on the choice of the threshold for the Laplace problem is done in \cref{sec:lap}.

\subsection{The algorithm} \label{sec:algo}
In \cref{alg:et} we show how the embedded Trefftz DG method is implemented. We display the method in pseudo-code and a python code example in \texttt{NGSolve}, using the operator $\tilde\calL$ as described in \cref{sec:implW}. 
Let us briefly comment on the algorithm. 

1. First of all, we notice that the computation and the application of the Trefftz embedding is separated from the setup of the linear system of the DG method. 
On the one hand this obviously leaves room for (efficiency) improvements as an integration into the DG assembly could allow to reduce the need for storing the DG matrices (and vectors in the homogeneous case) in the first place. 
On the other hand it also facilitates the realization of the embedded Trefftz DG method in a DG software framework without the need to interfere with optimized routines such as the assembly.
\rv{We focus on the second approach in the presentation of the implementation in \cref{alg:et}.
Both approaches are implemented in \texttt{NGSolve}+\texttt{NGSTrefftz} and can be found in the software documentation, see \cite{Stocker2022}.}

2. Further, we observe that the setup of the matrix $\bT$ is done element-by-element and can hence be carried out in an embarrassingly parallel manner. The resulting matrix is block diagonal such that the setup of $\bT^T\bA\bT$ and the r.h.s. vector can also be done efficiently. 

3. Assuming a corresponding DG method already exists for a user, he only needs to additionally specify the operators $\calL$ and $\tilde\calL$,the truncation parameter $\varepsilon$ and the r.h.s. to obtain the corresponding embedded Trefftz DG method. All of these inputs are canonical in most cases rendering the approach and its implementation quite easily accessible.

\begin{algorithm}
\noindent\begin{minipage}{.43\textwidth}
    \begin{algorithmic}[1]
        \small
        \Require Basis functions $\{\phi_i\}_i$, 
        DG formulation ($a_h$, $l$),
        {\color{teal} operators $\calL$, $\tilde{\calL}$,
        truncation parameter $\varepsilon$,
        {\color{purple} r.h.s. $f$} 
        } 
        \Function{dg matrix}{}
            \State $(\bA)_{ij}=a_h(\phi_j,\phi_i)$
            \State $(\bl)_i = \ell(\phi_i)$
        \EndFunction
        {\color{teal}
        \For{$K\in\Th$}
            \State $(\bW_K)_{ij}=\inner{\calL\phi_j,\tilde \calL\phi_i}_{0,h}$
            \State $\bT_K=\ker_h(\varepsilon;\bW_K)$
            \If{$f\neq 0$}
            \State ${\color{purple}(\bw_K)_{i}=\inner{f,\tilde \calL\phi_i}_{0,h}} $ 
            \State ${\color{purple}(\bu_{f})_K=\bW^\dagger_K \bw_K}$
            \EndIf
        \EndFor
        } 
        \State Solve ${\color{teal}\bT^T}\bA{\color{teal}\bT} ~ \bu_\IT = {\color{teal}\bT^T}  {\color{purple}(}\bl {\color{purple} - \bA \bu_f)} $
        \State $\bu_h={\color{teal}\bT} \bu_\IT {\color{purple} + \bu_f}$
        \State \Output $\bu_h$
    \end{algorithmic}
\end{minipage}
\hfill
\begin{minipage}{.52\textwidth}
\begin{lstlisting}[language=iPython,style=trefftzy]
def Solve(mesh, order, dgscheme<,>
          <L, Ltilde, eps>','
          'rhs'):
  fes = L2(mesh,order=order,dgjumps=True)
  uh = GridFunction(fes)
  a,f = dgscheme(fes)
  u,v = fes.TnT()
  <W = L(u)*Ltilde(v)*dx>
  'w = rhs*Ltilde(v)*dx'
  <T', uf' = TrefftzEmbedding(W,fes,eps',w')>
  <Tt = T.CreateTranspose()>
  TA = <Tt@>a.mat<@T>
  ut = TA.Inverse()<*(Tt>*'('f.vec'-a.mat*uf))'
  uh.vec.data = <T*>ut '+ uf'
  return uh
\end{lstlisting}
\end{minipage}
\caption{Embedded Trefftz DG algorithm in the version discussed in \cref{sec:impl}. Pseudo-code (left) and running python code (using \texttt{NGSolve} and \texttt{NGSTrefftz}, right). 
The parts used by a standard DG scheme are colored black, the parts using the embedded Trefftz DG method, as in \cref{sec:method2}, are colored in teal. Code needed to treat the inhomogeneous problem, presented in \cref{sec:method3}, is colored purple.
}\label{alg:et}
\end{algorithm}

\subsection{Computing the kernel matrix \texorpdfstring{$\bT_K$}{}}\label{app:determinekernel}   
In this section we briefly discuss how to numerically extract the kernel matrix $\bT_K$ from $\bW_K^T$ so that $\Ker(\bW_K^T) = \bT_K \cdot \mathbb{R}^{M_K}$. 
\rv{ The elementwise matrix $\bW_K$ is of size $\tilde N_K\times N_K$, where $\tilde N_K$ depends on the approach: 
If the matrix $\bW_K$ stems from either \eqref{def:W} or \eqref{def:W3}, then $\bW_K$ is a square matrix and $\tilde N_K=N_K$. If \eqref{def:W2} is used, then $\tilde N_K =\dim V^{\bq}(K)$.}
We discuss two possibilities: One based on a QR decomposition and one a singular value decomposition.
\subsubsection*{QR decomposition}
The most obvious option seems to be a QR decomposition of $\bW_K^T$, s.t. $ \bW_K^T=\bQ_K \cdot \bR_K$ for an orthogonal matrix $\bQ_K$, of size ${N_K\!\times\!N_K}$, and an upper triangular matrix $\bR_K$, sized ${N_K\times \tilde N_K}$, where we assume an ordering such that \( (\bR_K)_{ij} = 0\) for $i = L_K+1,..,\tilde N_K$. Then we have $\bW_K = \bR_K^T \cdot \bQ_K^T$ and that the last $M_K$ columns of $\bQ_K$ span the kernel of $\bW_K$. We denote this submatrix as $\bT_K$, i.e. $(\bT_K)_{i,j} := (\bQ_K)_{i,L_K+j}$ for $i=1,..,N_K$, $j=1,..,M_K$. One easily checks that there holds $\ker(\bW_K) = \bT_K \cdot \mathbb{R}^{M_K}$.
\subsubsection*{Singular value decomposition}
Applying a singular value decomposition (SVD) to $\bW_K$ yields $\bW_K = \bU_K \bSig_K \bV_K^T$ for two orthogonal matrices $\bU_K$, $\bV_K$ and a diagonal matrix $\bSig_K$ with $(\bSig_K)_{jj} = 0$ for $j>L_K$. 
Then one easily sees that the last $M_K$ columns of $\bV_K$ span the kernel of $\bW_K$ and we choose this submatrix as $\bT_K$, i.e. $(\bT_K)_{i,j} := (\bV_K)_{i,L_K+j}$ for $i=1,..,N_K$, $j=1,..,M_K$.

\subsection{Computing particular DG solutions}\label{app:partsolutions}  
To compute a (local) particular solution needed to solve inhomogeneous PDEs, see \cref{sec:method3}, on an element $K\in \mathcal{T}_h$. We assemble $(\bw_K)_{i}=(f,\calG(\be_i))=(f,\tilde\calL\phi_i)$ and define $(\bu_{f})_K=\bW^\dagger_K \bw_K.$
Here $\bW^\dagger_K$ denotes the pseudoinverse of the matrix $\bW_K$, which can be obtained using the QR decomposition or SVD of the matrix $\bW_K$ which may have already been computed when numerically computing the kernel of $\bW_K$, cf. \cref{app:determinekernel}.

\subsection{Algorithmic complexity} \label{sec:complexity}
The computation of the kernel of $\Pi \calL$ or a particular solution scales well in the mesh size and can easily be parallelized. However, the scaling in the local unknowns per element is severe. For a discretization of order $p$ the unknowns on each element scale like $\mathcal{O}(p^d)$. Hence, the costs for element-wise QR or SV decompositions (including the setup of the adjusted matrices and vectors and neglecting parallelization effects) are $\mathcal{O}(h^{-d} p^{3d})$. Let us compare these costs with the solver costs of standard DG and classical Trefftz DG methods. For a standard DG method we have $\mathcal{O}(h^{-d}p^d)$ unknowns globally and assuming a solver complexity $\mathcal{O}(\texttt{N}^\alpha)$, where \texttt{N} denotes the \ndofs~with $\alpha \in \mathbb{R}$, the costs are $\mathcal{O}( (h^{-d}p^{d})^\alpha)$. For a classical Trefftz DG method the costs are $\mathcal{O}((h^{-d}p^{(d-1)})^\alpha)$. The proposed method has a bad asymptotic complexity if $h = const$ and $p \to \infty$. In this case the computational costs will soon be dominated by the QR or SVD operations in the Trefftz DG setup. However, in the opposite case, $p = const$ and $h \to 0$ asymptotically the costs of the embedded Trefftz setup are only relevant for an optimal solver complexity $\alpha = 1$ and are negligible otherwise. In the applications below we observe that at least moderately high (fixed) polynomial degrees can be easily used with the embedded Trefftz DG method without dominating the computational costs by the computation of the embedding. 

\section{Applications}\label{sec:num}
We consider a set of different PDE problems and aim to compare the embedded Trefftz DG method to the underlying DG method (with full polynomial basis) and to a standard Trefftz basis, whenever available. 
We provide the scripts used to obtain the numerical results presented in this section\footnote{for reproduction material see \url{https://doi.org/10.25625/JIO1MP}} 
and interactive code examples to try out the implementation online without any prerequisites\footnote{find the documentation at \url{https://github.com/PaulSt/NGSTrefftz}}.

In \cref{sec:lap,sec:wave} we consider the Laplace equation and the acoustic wave equation, where explicit polynomial Trefftz basis functions are available for comparison.
For the Helmholtz equation, treated in \cref{sec:helm}, an explicit Trefftz basis is available, however it is not polynomial.
In \cref{sec:poi} we consider Laplace equation with a source term, applying the techniques described in \cref{sec:method3}.
In \cref{sec:qwave,sec:adv} we consider PDEs with non-constant coefficients.
In \cref{sec:compareschemes} we present a comparison to HDG methods in terms of computational costs associated to the linear solvers.

\subsection{Preliminaries and Notation}
Although there exist many DG schemes for each of the problems considered here, in order to focus on the comparison of the different bases, we will stick to one DG scheme per equation.

Further, we concentrated on simplicial meshes here, but the approach carries directly over to hexahedral and even polygonal meshes immediately.
If not explicitly mentioned we do not exploit the possibility to remove volume integrals, cf. \cref{rem:volints}.

In the following numerical experiments we will always use sparse direct solvers, i.e. \texttt{umfpack} \cite{10.1145/992200.992206} for non-symmetric and a \texttt{sparsecholesky} solver implemented in \texttt{NGSolve} for symmetric linear systems. 

In all examples, for the embedded Trefftz DG method we used the local SVD to compute the numerical kernel and particular soluations. 

For the purpose of brevity in the plots below we label the standard DG method with \DGmethod and the corresponding Trefftz method with \Tmethod.

For the description of the DG schemes in the following we introduce some  standard DG notation. We denote by $\Fh$ the set of facets and distinguish
$\Fh^\text{int}$, the set of interior facets, from $\Fh^\text{bnd}$, the set of boundary facets. 
Let $K$ and $K'$ be two neighboring elements sharing a facet $F \in \Fh^\text{int}$. On $F$ the functions $u_{K}$ and $u_{K'}$ denote the two limits of a discrete function from the different sides of the element interfaces. $n_K$ and $n_{K'}$ are the unit outer normals to $K$ and $K'$. We define
\begin{equation*}
    \jump{v} := v_{K} \cdot n_K + v_{K'} \cdot n_{K'}, \qquad
    \avg{v} := \frac12 v_{K} + \frac12 v_{K'}.
\end{equation*}
On the boundary facets we set $\jump{v} = v_K \nx$ and $\avg{v} = v$ where 
$\nx$ denotes the (spatial) outer normal on the boundary.

\subsection{Laplace equation}\label{sec:lap}
We start with the Laplace equation with Dirichlet boundary conditions
\begin{align*}
    \begin{cases}
    -\Delta u = 0 &\text{ in } \dom, \\
    u=g &\text{ on } \partial \dom,
    \end{cases}
\end{align*}
and consider a symmetric IP-DG discretization, cf. \cite{arnold2002unified}, given by
\begin{align}\label{eq:dglap}
    \begin{split}
    a_h(u,v) &= \int_\dom \nabla u\nabla v\ dV
    -\int_{\Fh^\text{int}}\left(\avg{\nabla u}\jump{v}+\avg{\nabla v}\jump{u} 
    - \frac{\alpha p^2}{h}\jump{u}\jump{v} \right) dS \\
           &\qquad -\int_{\Fh^\text{bnd}}\left(\nx\cdot\nabla u v+\nx\cdot\nabla v u-\frac{\alpha p^2}{h} u v \right) dS\\
    \ell(v) &= \int_{\Fh^\text{bnd}}\left(\frac{\alpha p^2}{h} gv -\nx\cdot\nabla vg\right) dS.
    \end{split}
\end{align}
As interior penalty parameter we choose $\alpha=4$.
We apply our method as described in \cref{sec:method1} with $\calL=-\Delta$.
The (embedded) Trefftz DG space, as in \eqref{eq:trefftzspace}, is now the space of harmonic polynomials.
For the numerical example we set the boundary condition $g$ such that the exact solution is given by
\begin{align}
    & u=\exp(x)\sin(y) && \text{on } \dom=(0,1)^2,\label{eq:lap2d}\\
    & u=\exp(x+y)\sin(\sqrt{2}z) && \text{on }\dom=(0,1)^3,\label{eq:lap3d}
\end{align}
and consider nested simplicial unstructured meshes created by refining a coarse simplicial unstructured mesh of initial mesh size $h\approx 0.5$.

\begin{figure}[ht]
    \hspace*{-0.02\textwidth}\resizebox{1.025\linewidth}{!}{
    \begin{tikzpicture}
        \begin{groupplot}[%
          group style={%
            group name={my plots},
            group size=3 by 1,
            horizontal sep=1.5cm,
          },
        legend style={
            legend columns=3,
            at={(-0.8,-0.2)},
            anchor=north,
            draw=none
        },
        xlabel={$p$},
        ymajorgrids=true,
        grid style=dashed,
        cycle list name=paulcolors
        ]      
        \nextgroupplot[ymode=log, ylabel={DG-error}]
        \foreach \h in {0.25}{
            \addplot+[discard if not={h}{\h}] table [x=p, y=error, col sep=comma] {results/lap2d8.csv};
            \addplot+[discard if not={h}{\h}] table [x=p, y=terror, col sep=comma] {results/lap2d8.csv};
            \addplot+[discard if not={h}{\h}] table [x=p, y=svdterror, col sep=comma] {results/lap2d8.csv};
        }
        \nextgroupplot[ymode=log, xlabel={\ndofs},ylabel={DG-error}]
        \foreach \h in {0.25}{
            \addplot+[discard if not={h}{\h}] table [x=ndof, y=error, col sep=comma] {results/lap2d8.csv};
            \addplot+[discard if not={h}{\h}] table [x=tndof, y=terror, col sep=comma] {results/lap2d8.csv};
            \addplot+[discard if not={h}{\h}] table [x=tndof, y=svdterror, col sep=comma] {results/lap2d8.csv};
        }
        \nextgroupplot[ymode=log,ylabel={Condition number}]
        \foreach \h in {0.25}{
            \addplot+[discard if not={h}{\h}] table [x=p, y=l2cond, col sep=comma] {results/lap2d16.csv};
            \addplot+[discard if not={h}{\h}] table [x=p, y=tcond, col sep=comma] {results/lap2d16.csv};
            \addplot+[discard if not={h}{\h}] table [x=p, y=svdtcond, col sep=comma] {results/lap2d16.csv};
        }
        \legend{\DGmethod, \Tmethod, \ETmethod}
        \end{groupplot}
    \end{tikzpicture}}
    \vspace*{-0.5cm}
    \caption{Results for the Laplace problem in 2 dimensions with exact solution \eqref{eq:lap2d} on a fixed mesh with $h=0.25$ for different values of polynomial order $p$.
    Left: Convergence in terms of polynomial order $p$. 
    Center: Convergence in terms of degrees of freedom of the linear system.
    Right: Condition number of the discrete system.
}
    \label{fig:lap}
\end{figure}
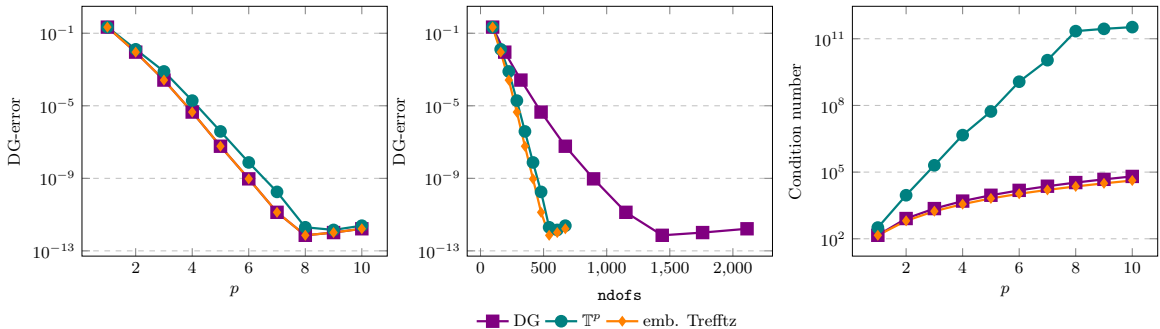
    
In \Cref{fig:lap} we observe exponential convergence in terms of $p$ and $\ndofs$.
In \cite{HMPS14} exponential convergence for harmonic polynomials in terms of $\ndofs$ is seen to be superior to standard polynomials, this can be seen in the center of \Cref{fig:lap}.
We measure the error in the DG-norm given by 
$$ \|w\|_{DG}=\sum_{K\in\Th}\|\nabla w\|_{L^2(K)}+\sum_{E\in\partial\Th} \|\sqrt{\alpha p^2h^{-1}}\jump{w}\|_{L^2(E)}.$$

In \Cref{fig:lap} we also compare the condition number of the different system matrices.
As expected from \Cref{lem:cond}, the conditioning of the embedded Trefftz method is bounded by the conditioning of the full system, and even outperforms it.
The very basic implementation of the harmonic polynomials that we used here appears very ill-conditioned.

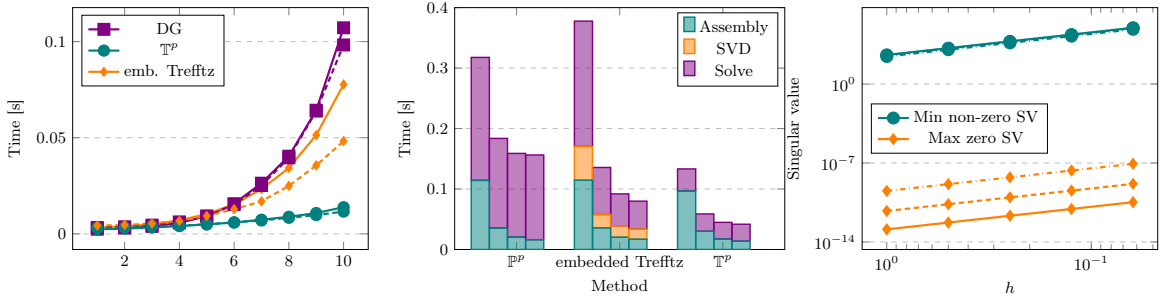
\begin{figure}[ht]
\begin{center}
\hspace*{-0.02\textwidth}\resizebox{1.025\linewidth}{!}{
\begin{tikzpicture}
    \begin{groupplot}[%
      group style={%
        group size=3 by 1,
        horizontal sep=1.5cm,
      },
    ymajorgrids=true,
    grid style=dashed,
    ]      

    \nextgroupplot[ylabel={Time [s]},yticklabels={0,0,0.05,0.1}, cycle list name=paulcolors, legend pos=north west]
    \foreach \h in {0.25}{
        \addplot+[discard if not={h}{\h}] table [x=p, y=time, col sep=comma] {results/lap2d4.csv};
        \addplot+[discard if not={h}{\h}] table [x=p, y=ttime, col sep=comma] {results/lap2d4.csv};
        \addplot+[discard if not={h}{\h}] table [x=p, y=svdttime, col sep=comma] {results/lap2d4.csv};
        \addplot+[discard if not={h}{\h}] table [x=p, y=time, col sep=comma] {results/lap2d8.csv};
        \addplot+[discard if not={h}{\h}] table [x=p, y=ttime, col sep=comma] {results/lap2d8.csv};
        \addplot+[discard if not={h}{\h}] table [x=p, y=svdttime, col sep=comma] {results/lap2d8.csv};
    }
    \legend{\DGmethod, \Tmethod, \ETmethod}

    \nextgroupplot[ybar stacked, 
    symbolic x coords={l2,svdt,trefftz},
    xticklabels={$\mathbb{P}^p$,embedded Trefftz,$\mathbb{T}^p$},
    xtick=data,
    xlabel={Method},
    ylabel={Time [s]},
    cycle list name=paulcolorsfill,
    enlarge x limits=.3, x=20mm,
    xshift=0.5*\pgfplotbarwidth,
    ymin=0,ymax=0.4,
    ]
    \addplot+[ybar,discard if not={p}{5},discard if not={hnr}{4}] table [x=method,y=assemble, col sep=comma] {results/lapbar2d8.csv};
    \addplot+[ybar,discard if not={p}{5},discard if not={hnr}{4}] table [x=method,y=svd, col sep=comma] {results/lapbar2d8.csv};
    \addplot+[ybar,discard if not={p}{5},discard if not={hnr}{4}] table [x=method,y=solve, col sep=comma] {results/lapbar2d8.csv};
    \legend{Assembly, SVD, Solve}

    \nextgroupplot[xmode=log,ymode=log, xlabel={$h$}, x dir=reverse, ylabel=Singular value, cycle list name=paulcolors2, 
        legend style={at={(0.03,0.5)},anchor=west}
    ]
    \foreach \p in {3,6,10}{
        \addplot+[discard if not={p}{\p}] table [x=h, y=minnz, col sep=comma] {results/lap2d8.csv};
        \addplot+[discard if not={p}{\p}] table [x=h, y=maxz, col sep=comma] {results/lap2d8.csv};
        \legend{Min non-zero SV, Max zero SV}
    }

\end{groupplot}

    \begin{groupplot}[%
      group style={%
        group size=2 by 1,
        horizontal sep=1.5cm,
      },
    ]      
    \nextgroupplot[group/empty plot,ymin=0,ymax=1,xmin=0,xmax=1]
    \nextgroupplot[ybar stacked, 
    symbolic x coords={l2,svdt,trefftz},
    xtick=data,
    cycle list name=paulcolorsfill,
    enlarge x limits=.3, x=20mm,
    xshift=1.5*\pgfplotbarwidth,
    ymin=0,ymax=0.4,hide axis
    ]
    \addplot+[ybar,discard if not={p}{5},discard if not={hnr}{4}] table [x=method,y=assemble, col sep=comma] {results/lapbar2d12.csv};
    \addplot+[ybar,discard if not={p}{5},discard if not={hnr}{4}] table [x=method,y=svd, col sep=comma] {results/lapbar2d12.csv};
    \addplot+[ybar,discard if not={p}{5},discard if not={hnr}{4}] table [x=method,y=solve, col sep=comma] {results/lapbar2d12.csv};
    \end{groupplot}

    \begin{groupplot}[%
      group style={%
        group size=2 by 1,
        horizontal sep=1.5cm,
      },
    ]      
    \nextgroupplot[group/empty plot,ymin=0,ymax=1,xmin=0,xmax=1]
    \nextgroupplot[ybar stacked, 
    symbolic x coords={l2,svdt,trefftz},
    xtick=data,
    cycle list name=paulcolorsfill,
    enlarge x limits=.3, x=20mm,
    xshift=-0.5*\pgfplotbarwidth,
    ymin=0,ymax=0.4,hide axis,
    ]
    \addplot+[ybar,discard if not={p}{5},discard if not={hnr}{4}] table [x=method,y=assemble, col sep=comma] {results/lapbar2d4.csv};
    \addplot+[ybar,discard if not={p}{5},discard if not={hnr}{4}] table [x=method,y=svd, col sep=comma] {results/lapbar2d4.csv};
    \addplot+[ybar,discard if not={p}{5},discard if not={hnr}{4}] table [x=method,y=solve, col sep=comma] {results/lapbar2d4.csv};
    \end{groupplot}

    \begin{groupplot}[%
      group style={%
        group size=2 by 1,
        horizontal sep=1.5cm,
      },
    ]      
    \nextgroupplot[group/empty plot,ymin=0,ymax=1,xmin=0,xmax=1]
    \nextgroupplot[ybar stacked, 
    symbolic x coords={l2,svdt,trefftz},
    xtick=data,
    cycle list name=paulcolorsfill,
    enlarge x limits=.3, x=20mm,
    xshift=-1.5*\pgfplotbarwidth,
    ymin=0,ymax=0.4,hide axis,
    ]
    \addplot+[ybar,discard if not={p}{5},discard if not={hnr}{4}] table [x=method,y=assemble, col sep=comma] {results/lapbar2d1.csv};
    \addplot+[ybar,discard if not={p}{5},discard if not={hnr}{4}] table [x=method,y=svd, col sep=comma] {results/lapbar2d1.csv};
    \addplot+[ybar,discard if not={p}{5},discard if not={hnr}{4}] table [x=method,y=solve, col sep=comma] {results/lapbar2d1.csv};
    \end{groupplot}
\end{tikzpicture}}
\end{center}    
\vspace*{-0.2cm}
\caption{Results for the Laplace problem in 2 dimensions. 
    Left: Comparison of the runtime on 4 threads (solid lines) and on 8 threads (dashed lines). 
    Center: Comparison of timings for the different steps of each method, for $p=5$ on a fixed mesh with $h=2^{-4}$. From left to right the bars correspond to 1,4,8, and 12 threads. 
    Right: Comparison of the singular values obtained when determining the null space of the operator for $p=3,6,10$ (full, dashed, dash-dotted line). 
}
\label{fig:lap2}
\end{figure}

We show plots of the runtime in \Cref{fig:lap2,fig:lap3} for the two and three dimensional example, respectively.
In \Cref{fig:lap2} it can be seen that the embedded Trefftz DG method benefits greatly from parallelization. 
The runtime is broken into parts of assembling the linear system and solving the linear system in \Cref{fig:lap2,fig:lap3}. 
For the embedded Trefftz DG method we plot also the time spent finding the local kernels of the operator using SVD, as described in \Cref{app:determinekernel}.
Note that in the case of the embedded Trefftz DG method the solving step includes the matrix multiplications needed in \eqref{eq:trefftzlinearsys}, therefore the time spent is not equal to that of the standard Trefftz method.
In 2d performing the SVD sequentially is time consuming, as shown in \Cref{fig:lap2}, a QR decomposition could improve the performance.
However, in 3d, see \Cref{fig:lap3}, the time spent on the SVD is completely negligible compared to the solver costs.

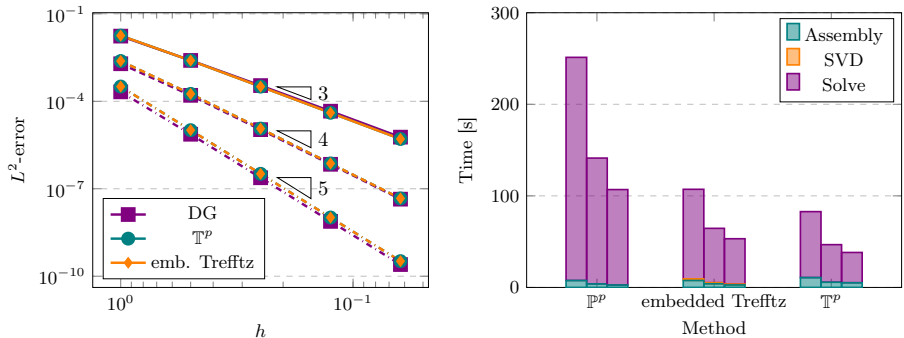
\begin{figure}[ht]
\begin{center}    
\resizebox{0.8\linewidth}{!}{
\begin{tikzpicture}
    \begin{groupplot}[%
      group style={%
        group size=2 by 1,
        horizontal sep=1.5cm,
      },
    ymajorgrids=true,
    grid style=dashed,
    ]      
    \nextgroupplot[xmode=log,ymode=log, xlabel={$h$}, x dir=reverse, ylabel=$L^2$-error, cycle list name=paulcolors, 
        legend pos=south west
    ]
    \foreach \p in {2,3,4}{
        \addplot+[discard if not={p}{\p}] table [x=h, y=error, col sep=comma] {results/lap3d8.csv};
        \addplot+[discard if not={p}{\p}] table [x=h, y=terror, col sep=comma] {results/lap3d8.csv};
        \addplot+[discard if not={p}{\p}] table [x=h, y=svdterror, col sep=comma] {results/lap3d8.csv};
        \legend{\DGmethod, \Tmethod, \ETmethod}
    }
    \logLogSlopeTriangle{0.65}{0.1}{0.73}{3}{black}; 
    \logLogSlopeTriangle{0.65}{0.1}{0.57}{4}{black};
    \logLogSlopeTriangle{0.65}{0.1}{0.4}{5}{black};
    \nextgroupplot[ybar stacked, 
    symbolic x coords={l2,svdt,trefftz},
    xticklabels={$\mathbb{P}^p$,embedded Trefftz,$\mathbb{T}^p$},
    xtick=data,
    xlabel={Method},
    ylabel={Time [s]},
    cycle list name=paulcolorsfill,
    enlarge x limits=.3, x=20mm,
    xshift=0.5*\pgfplotbarwidth,
    ymin=0,ymax=300,
    ]
    \addplot+[ybar,discard if not={hnr}{3},discard if not={p}{5}] table [x=method,y=assemble, col sep=comma] {results/lapbar3d8.csv};
    \addplot+[ybar,discard if not={hnr}{3},discard if not={p}{5}] table [x=method,y=svd, col sep=comma] {results/lapbar3d8.csv};
    \addplot+[ybar,discard if not={hnr}{3},discard if not={p}{5}] table [x=method,y=solve, col sep=comma] {results/lapbar3d8.csv};
    \legend{Assembly, SVD, Solve}
    \end{groupplot}

    \begin{groupplot}[%
      group style={%
        group size=2 by 1,
        horizontal sep=1.5cm,
      },
    ]      
    \nextgroupplot[group/empty plot,ymin=0,ymax=1,xmin=0,xmax=1]
    \nextgroupplot[ybar stacked, 
    symbolic x coords={l2,svdt,trefftz},
    xtick=data,
    cycle list name=paulcolorsfill,
    enlarge x limits=.3, x=20mm,
    xshift=1.5*\pgfplotbarwidth,
    ymin=0,ymax=300,hide axis
    ]
    \addplot+[ybar,discard if not={hnr}{3},discard if not={p}{5}] table [x=method,y=assemble, col sep=comma] {results/lapbar3d12.csv};
    \addplot+[ybar,discard if not={hnr}{3},discard if not={p}{5}] table [x=method,y=svd, col sep=comma] {results/lapbar3d12.csv};
    \addplot+[ybar,discard if not={hnr}{3},discard if not={p}{5}] table [x=method,y=solve, col sep=comma] {results/lapbar3d12.csv};
    \end{groupplot}

    \begin{groupplot}[%
      group style={%
        group size=2 by 1,
        horizontal sep=1.5cm,
      },
    ]      
    \nextgroupplot[group/empty plot,ymin=0,ymax=1,xmin=0,xmax=1]
    \nextgroupplot[ybar stacked, 
    symbolic x coords={l2,svdt,trefftz},
    xtick=data,
    cycle list name=paulcolorsfill,
    enlarge x limits=.3, x=20mm,
    xshift=-0.5*\pgfplotbarwidth,
    ymin=0,ymax=300,hide axis,
    ]
    \addplot+[ybar,discard if not={hnr}{3},discard if not={p}{5}] table [x=method,y=assemble, col sep=comma] {results/lapbar3d4.csv};
    \addplot+[ybar,discard if not={hnr}{3},discard if not={p}{5}] table [x=method,y=svd, col sep=comma] {results/lapbar3d4.csv};
    \addplot+[ybar,discard if not={hnr}{3},discard if not={p}{5}] table [x=method,y=solve, col sep=comma] {results/lapbar3d4.csv};
    \end{groupplot}
\end{tikzpicture}}
\end{center}    
\vspace*{-0.2cm}
\caption{Results for the Laplace problem in 3 dimensions with exact solution \eqref{eq:lap3d}. On the left: $h$-convergence for $p=2,3,4$. On the right: Comparison of timings for the different steps of each method, for $p=5$ on a fixed mesh with $h=2^{-3}$. The bars from left to right correspond to computations using $4,8,12$ threads for each method.}
\label{fig:lap3}
\end{figure}

It is possible to automatically find the dimension of the Trefftz space when computing the null space of the local operator.
To account for numerical errors in the computations of the singular values we choose a threshold of $\varepsilon = 10^{-7}$, identifying values below as zero singular values. 
In \Cref{fig:lap2} we plot the largest singular value that still needs to be identified as a zero singular value, as well as the smallest non-zero singular value.
Note that prior knowledge of the dimension of the Trefftz space, which we do not exploit here in the implementation, would eliminate this problem completely, cf. \cref{sec:dimofk}.

In \cite{LiShu2006,LiShu2012} convergence rates in $h$ for a mixed formulation over harmonic polynomials are shown.
The results in \Cref{fig:lap3} show that we recover the expected convergence rate of $\|u-u_h\|_{L^2(\dom)}=\calO(h^{\min(m,p)+1})$ for $u\in H^m(\dom)$.

\subsection{Poisson equation}\label{sec:poi}
Now, we consider the Poisson equation with Dirichlet boundary conditions
\begin{align*}
    \begin{cases}
    \Delta u = f &\text{ in } \dom, \\
    u=g &\text{ on } \partial \dom.
    \end{cases}
\end{align*}
We can use the symmetric IP-DG discretization given in \eqref{eq:dglap} with the new right hand side
\begin{align}\label{eq:poirhs}
    \ell(v) &= \int_\dom fv\ dV + \int_{\Fh^\text{bnd}}\left(\frac{\alpha p^2}{h} gv -\nx\cdot\nabla vg\right) dS.
\end{align}
For the numerical example we set the right hand side and the boundary conditions such that the exact solution is given by
\begin{align}
    & u=\sin(x)\sin(y)\sin(z) && \text{on }\dom=(0,1)^3.\label{poisol3}
\end{align}
To apply the embedded Trefftz method we use the approach described in \cref{sec:method3} to find a particular solution and homogenize the system.

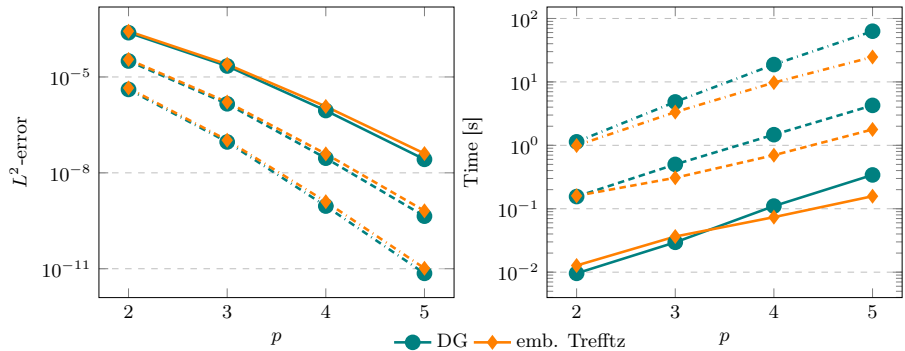
\begin{figure}[ht]
\resizebox{0.8\linewidth}{!}{
\begin{tikzpicture}
    \begin{groupplot}[%
      group style={%
        group name={my plots},
        group size=2 by 1,
        horizontal sep=1.5cm,
      },
    legend style={
        legend columns=2,
        at={(xticklabel cs:-0.1,0)},
        anchor=north,
        draw=none
    },
    xlabel={$h$},
    ymajorgrids=true,
    grid style=dashed,
    cycle list name=paulcolors2,
    ]      
    \nextgroupplot[ymode=log,xlabel={$p$}, ylabel={$L^2$-error}]
        \foreach \h in {0.5,0.25,0.125}{
            \addplot+[discard if not={h}{\h}] table [x=p, y=l2error, col sep=comma] {results/poi3d12.csv};
            \addplot+[discard if not={h}{\h}] table [x=p, y=svdt2error, col sep=comma] {results/poi3d12.csv};
        }
    \nextgroupplot[ymode=log, xlabel={$p$},ylabel={Time [s]}]
        \foreach \h in {0.5,0.25,0.125}{
            \addplot+[discard if not={h}{\h}] table [x=p, y=l2time, col sep=comma] {results/poi3d12.csv};
            \addplot+[discard if not={h}{\h}] table [x=p, y=svdt2time, col sep=comma] {results/poi3d12.csv};
        }
        \legend{\DGmethod,\ETmethod}
        \end{groupplot}
\end{tikzpicture}}
\caption{Numerical results for Poisson equation in 3 dimensions for meshes $h=0.5,\ 0.25,\ 0.125$ (solid, dashed, dotted dashed line, respectively). Left: $p$-convergence to the exact solution \eqref{poisol3}. Right: Timings on 12 threads.}
    \label{fig:poi}
\end{figure}

Results in terms of accuracy and computing time are shown in \Cref{fig:poi}. 
The embedded Trefftz DG method matches the convergence rates of the DG scheme using the standard polynomial space.
\rv{We consider the runtime for fixed mesh sizes and varying polynomial degree from $p=1,\dots,7$.
As discussed in \cref{sec:complexity} we expect worse performance for sparse meshes and large polyomial degree, due to the increased cost of computing the local kernel of the operator.
We can observe this on the mesh of the unit cube with mesh size $h=0.5$, where the embedded Trefftz method does not show significant improvement of the runtime.
The runtime on the fine mesh with $h=0.125$ is considerably improved even for rather large values of $p$.  
}

\subsection{\rv{Poisson equation with varying coefficient}}\label{sec:poivar}
We consider the Poisson equation with varying coefficients and Dirichlet boundary conditions, given by
\begin{align*}
    \begin{cases}
        \nabla\cdot(\bM\nabla u) = f &\text{ in } \dom, \\
    u=g &\text{ on } \partial \dom.
    \end{cases}
\end{align*}
We can use the symmetric IP-DG discretization given in \eqref{eq:dglap} and right hand side \eqref{eq:poirhs}, by inserting $\bM$ at the appropriate locations.
For the numerical example we fix the right hand side $f$ and the boundary conditions such that the exact solution is given by
\begin{align}\label{eq:poivarsol}
    & u=\sin(x)\sin(y) && \text{with} && \bM=\begin{pmatrix}1+x&0\\ 0&1+y \end{pmatrix}, && \text{and} && \dom=(0,1)^2.
\end{align}
To apply the embedded Trefftz method we apply again use the approach detailed in \cref{sec:method3} to homogenize the system.
Due to the varying coefficient we cannot find a standard Trefftz space, thus we follow the approach details in \Cref{sec:method2}, embedding a weak Trefftz space.
To construct the Trefftz embedding we apply \cref{eq:weakTproj,def:W2}, resulting in 
\begin{equation} \label{eq:Wpoivar}
    (\bW)_{ij} =\inner{-\nabla\cdot(\bM\nabla \phi_j),\psi_i},\quad \psi_i\in V^q(\Th).
\end{equation}
We compare different choices for the polynomial degree of the space $V^q$, with $q=p-1,\ p-2,\ p-3$. 
Note that the considerations in \Cref{sec:method2} imply that the optimal order is given by $q=p-2$ since $\Delta:V^{p}\rightarrow V^{p-2}$.

\begin{figure}[ht]
\resizebox{0.8\linewidth}{!}{
\begin{tikzpicture}
    \begin{groupplot}[%
      group style={%
        group name={my plots},
        group size=2 by 1,
        horizontal sep=1.5cm,
      },
    legend style={
        legend columns=4,
        at={(-0.1,-0.2)},
        anchor=north,
        draw=none
    },
    xlabel={$h$},
    ymajorgrids=true,
    grid style=dashed,
    cycle list name=paulcolors4,
    ]      
    \nextgroupplot[xmode=log,ymode=log,xlabel={$h$},x dir=reverse, ylabel={$L^2$-error}]
        \foreach \p in {7}{
            \addplot+[discard if not={p}{\p}] table [x=h, y=error, col sep=comma] {results/poivar2d12.csv};
            \addplot+[discard if not={p}{\p}] table [x=h, y=svdterror1, col sep=comma] {results/poivar2d12.csv};
            \addplot+[discard if not={p}{\p}] table [x=h, y=svdterror2, col sep=comma] {results/poivar2d12.csv};
            \addplot+[discard if not={p}{\p}] table [x=h, y=svdterror3, col sep=comma] {results/poivar2d12.csv};
        }
    \logLogSlopeTriangle{0.65}{0.1}{0.4}{8}{black};
    \nextgroupplot[xlabel={$p$},ylabel={\ndofs}]
        \foreach \h in {0.125}{
            \addplot+[discard if not={h}{\h}] table [x=p, y=ndof, col sep=comma] {results/poivar2d12.csv};
            \addplot+[discard if not={h}{\h}] table [x=p, y=tndof1, col sep=comma] {results/poivar2d12.csv};
            \addplot+[discard if not={h}{\h}] table [x=p, y=tndof2, col sep=comma] {results/poivar2d12.csv};
            \addplot+[discard if not={h}{\h}] table [x=p, y=tndof3, col sep=comma] {results/poivar2d12.csv};
        }
        \legend{\DGmethod, \ETmethod\ $q=p-1$, \ETmethod\ $q=p-2$, \ETmethod\ $q=p-3$}
        \end{groupplot}
\end{tikzpicture}}
\caption{Numerical results for the Poisson equation with varying coefficient comparing different choices of $q$ in \eqref{eq:Wpoivar}. Results are obtained in 2 dimensions for meshes $h=1,0.5,\ 0.25,\ 0.125$. 
Left: $h$-convergence for $p=6$ to the exact solution \eqref{eq:poivarsol}. 
Right: Comparison of the global number of degrees of freedom (\ndofs) for different polynomial degrees on a fixed mesh with $h=0.125$.
}
    \label{fig:poivar}
\end{figure}
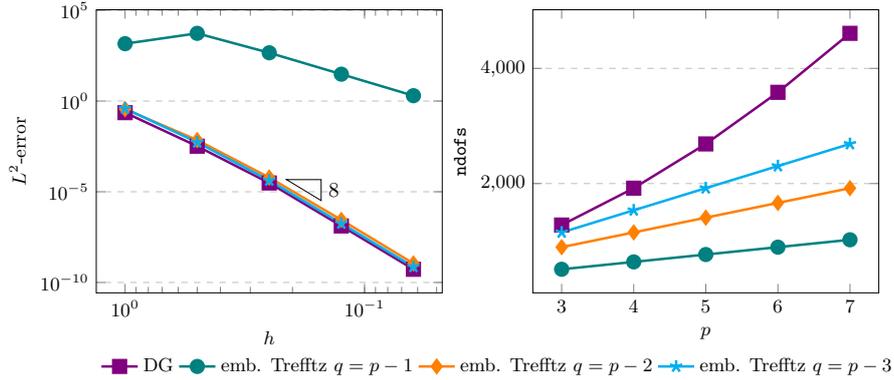

The results presented in \Cref{fig:poivar} show the expected behavior.
Testing with an increased number of test functions, using $q=p-1$, leads to an embedding of a smaller subspace than the actual weak Trefftz space. 
This results in a loss of the good approximation properties, as we see in \Cref{fig:poivar} on the left.
Using a smaller test space $V^q$ with $q=p-3$, is less efficient since the approximation space holds more degrees of freedom, as shown in \Cref{fig:poivar} on the right. 
While the error for the choices $q=p-2$ and $q=p-3$ shows slight differences, most likely due to different conditioning, the behavior is the same, as predicted by \cref{lem:cea}.
Specifically, the different computation of the particular solution does not affect the approximation. 
Recall that the particular solutio satisfies $\Pi \calL u_{h,f} = \Pi f$, i.e. $\inner{\calL u_{h,f},v}=\inner{f,v}\, \forall v\in V^\bq$, as described in \cref{sec:method3}.
Hence, for the choice of $q=p-3$, the approximation of the particular solution might be worse than for the choice $q=p-2$, however, the loss in accuracy is made up for by the larger Trefftz space used after homogenization.

\subsection{Acoustic wave equation with piecewise constant wave speed}\label{sec:wave}
Next, we consider the first order wave equation in a space-time setting, given by
\begin{align}\label{eq:firstordwaveeq}
    \begin{cases}
        \nabla \cdot \bm\sigma + c^{-2} \frac{\partial v}{\partial t} = 0  & \qquad \text{ in } \dom\times [0,T],\\
        \nabla v + \frac{\partial \bm\sigma}{\partial t} = \bm 0 & \qquad \text{ in } \dom\times [0,T], \\
        v(\cdot,0) = v_0,\ \bm\sigma(\cdot,0) = \bm\sigma_0 & \qquad \text{ on } \Omega\times\{0\}, \\
        v=g_D & \qquad \text{ on } \partial\dom \times [0,T],
    \end{cases}
\end{align}
where
$(\bm\sigma,v)$ are the unknowns - acoustic speed and pressure,
$(\bm\sigma_0,v_0)$ are the initial conditions,
$g_D$ is Dirichlet boundary data,
and $T$ is the final time. 
In this section we will assume that the wavespeed $c$ is piecewise constant.

To apply our framework we take
\begin{align*}
    \calL=
\begin{pmatrix}
    \nabla\cdot & c^{-2}\dt \\
    \nabla & \dt
\end{pmatrix}
\qquad
u=
\begin{pmatrix}
    \bm\sigma\\
    v
\end{pmatrix}
\end{align*}
The local space-time Trefftz space was introduced and analyzed in \cite{mope18}, and is given by
\begin{align}\label{eq:Twave}
\begin{aligned}
\IT^p(K)=&
\left\{\wt\in\IP^p(K)^{n+1} \Big|
\begin{array}{l}
\nabla w+\partial_t\bm\tau = \bm 0 \\
\nabla\cdot\bm\tau+ c^{-2}\partial_t w = 0 
\end{array}
\right\}
\end{aligned}
\end{align}
We consider the space-time DG-scheme used in \cite{StockerSchoeberl,BMPS20,mope18,qtrefftz}:
\begin{align}\begin{split}
    & \text{Find } (v_{hp},\bm\sigma_{hp})\in (\Vhp)^{d+1} \quad\text{s.t.} \\
    & a_h(v_{hp},\bm\sigma_{hp};w,\bm\tau) = \ell(w,\bm\tau) \qquad \forall (w,\bm\tau)\in(\Vhp)^{d+1},
\end{split}\end{align}
with
\begin{align}  
    \label{eq:dgwave}
    a_h(v_{hp},\bm\sigma_{hp};w,\bm\tau ) &= -\sum_{K\in\Fh}\int_K \left(v_{hp}\left(\nabla\cdot\bm\tau+c^{-2}\partial_t w\right)+\bm\sigma_{hp}\cdot\left(\partial_t\bm\tau+\nabla w\right)\right) dV\\
                                                  &\quad+\int_{\mathcal{F}_h^{\text{space}}} \left( c^{-2} v_{hp}^- \jump{ w}_t +\bm\sigma^-_{hp}\cdot \jump{\bm\tau}_t + v^-_{hp} \jump{\bm\tau}_N+\bm\sigma^-_{hp} \cdot \jump{ w}_N \right)\ dS \nonumber\\
                                                  &\quad + \int_{\mathcal{F}_h^{\text{time}}} \left( \avg{ v_{hp}} \jump{ \bm\tau}_N + \avg{\bm\sigma_{hp}}\cdot\jump{ w}_N + \alpha \jump{ v_{hp}}_N\cdot\jump{ w }_N + \beta \jump{\bm\sigma_{hp}}_N\jump{\bm\tau}_N \right)\ dS \nonumber\\
    &\quad + \int_{\mathcal{F}^T_h} c^{-2}v_{hp} w + \bm\sigma_{hp}\cdot\bm\tau\ dS + \int_{\mathcal{F}^D_h} (\bm\sigma\cdot\bm n^x_\Omega+\alpha v_{hp} ) w \ dS\nonumber\\
\end{align}  
and
\begin{align*}  
    \ell(w,\bm \tau) &= \int_{\mathcal{F}_h^0} c^{-2} v_0 w + \bm\sigma\cdot\bm\tau \ dS + \int_{\mathcal{F}_h^D} g_D(\alpha w - \bm\tau\cdot\bm n^x_\Omega)\ dS.
\end{align*}


For the numerical example we set boundary and initial conditions such that the exact solution with wavespeed $c=1$ is given by
\begin{align}\label{eq:waveex}
    & v=\sqrt{2}\cos(\sqrt{2}t+x+y),\ \bm\sigma=(-\cos(\sqrt{2}t+x+y),-\cos(\sqrt{2}t+x+y)),\ \dom=(0,1)^2.
\end{align}
The penalization parameters in \eqref{eq:dgwave} are chosen as $\alpha=\beta=0.5$, as in \cite{StockerSchoeberl}.
The considered space--time meshes are made up from a simplicial unstructured mesh of the spatial domain and a tensor product mesh in time.
The height of the time slabs is chosen approximately as the mesh size of the spatial domain. 
In \cref{fig:wave} we observe that both Trefftz method maintain the accuracy of the underlying DG method while yielding almost an order of magnitude speed up in the computation time on the finest considered level. Also on the finest level the solver costs seem to dominate so that the embedded Trefftz DG method and the Trefftz method perform equally well.

\begin{figure}[ht]
    \begin{center}
\resizebox{0.8\linewidth}{!}{
\begin{tikzpicture}
    \begin{groupplot}[%
      group style={%
        group size=2 by 1,
        horizontal sep=1.5cm,
      },
    legend style={
        legend columns=3,
        at={(-0.2,-0.1)},
        anchor=north,
        draw=none
    },
    ymajorgrids=true,
    grid style=dashed,
    ]      
    \nextgroupplot[xmode=log,ymode=log, xlabel={$h$}, x dir=reverse, ylabel=$L^2$-error, cycle list name=paulcolors]
    \foreach \p in {2,3,4}{
        \addplot+[discard if not={p}{\p}] table [x=h, y=error, col sep=comma] {results/wave2d24.csv};
        \addplot+[discard if not={p}{\p}] table [x=h, y=terror, col sep=comma] {results/wave2d24.csv};
        \addplot+[discard if not={p}{\p}] table [x=h, y=svdterror, col sep=comma] {results/wave2d24.csv};
    }
    \logLogSlopeTriangle{0.66}{0.1}{0.72}{3}{black}; 
    \logLogSlopeTriangle{0.66}{0.1}{0.58}{4}{black};
    \logLogSlopeTriangle{0.66}{0.1}{0.43}{5}{black};
    \nextgroupplot[xmode=log,ymode=log, xlabel={$h$}, x dir=reverse, ylabel={Time [s]}, cycle list name=paulcolors4]
    \foreach \p in {4}{
        \addplot+[discard if not={p}{\p}] table [x=h, y=time, col sep=comma] {results/wave2d24.csv};
        \addplot+[discard if not={p}{\p}] table [x=h, y=ttime, col sep=comma] {results/wave2d24.csv};
        \addplot+[discard if not={p}{\p}] table [x=h, y=svdttime, col sep=comma] {results/wave2d24.csv};
    }
    \legend{\DGmethod, \Tmethod, \ETmethod}
    \end{groupplot}
\end{tikzpicture}}
\end{center}
\caption{Numerical results for the wave equation in 2+1 dimensions. On the left: $h$-convergence in the space-time $L^2$-error for $p=2,3,4$ corresponding to full, dashed and dash-dotted line. Convergence is given with respect to exact solution \eqref{eq:waveex}. On the right: timings for $p=4$ \rv{on 24 threads}.
}
\label{fig:wave}
\end{figure}
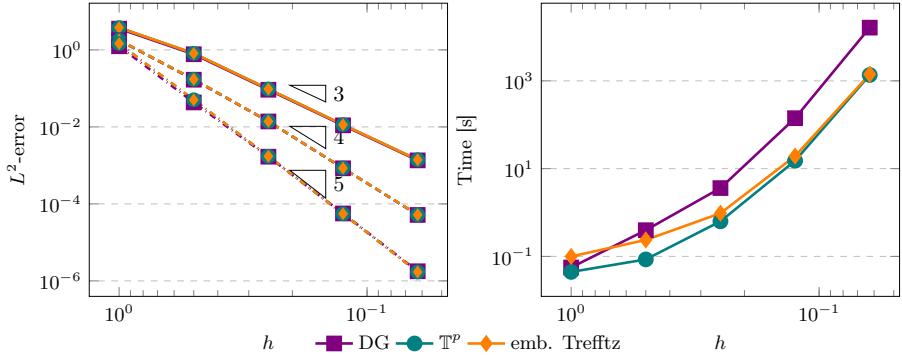

\subsection{Acoustic wave equation in inhomogeneous media}\label{sec:qwave}
Instead of the piecewise constant case from the previous section, we now consider \eqref{eq:firstordwaveeq} with wavespeed smoothly varying in space $c=c(\bm x)$.
The construction of a basis for \eqref{eq:Twave} with smooth wavespeed is non-viable. 
A space with similar properties, for which a basis can be constructed, has been introduced in \cite{qtrefftz} - the quasi-Trefftz space for an element $K\in\Th$ is given by
\begin{align}\label{eq:QT}
\begin{aligned}
\QT^p(K)\!\!:=\!&
\left\{\wt\!\in\IP^p(K)^{n+1} \Big|\!\!\!
\begin{array}{l}
D^\mi (\nabla w+\partial_t\bm\tau)(\bx_K,t_K) = \bm 0 \\
D^\mi (\nabla\cdot\bm\tau+ c(\bx)^{-2}\partial_t w)(\bx_K,t_K) = 0 
\end{array}
\!\forall \mi\!\in\!\IN^{n+1}_0\!, |\mi|\le p\!-\!1\!\right\}\!.\!\!
\end{aligned}
\end{align}
\rv{
where we use the notation $\mi=(\mi_\bx,i_t)=(i_{x_1},\ldots,i_{x_n},i_t)
\in \IN^{n+1}_0$ for integer non-negative multi-indices and 
$D^\mi f := 
\partial_{x_1}^{i_{x_1}}\cdots\partial_{x_n}^{i_{x_n}}\partial_{t}^{i_t} f$ for the derivatives.
The polynomials in the space are constructed such that the Taylor polynomial of their image under the wave operator vanishes at the element center $(\bx_K,t_K)$ up to order $p-2$.

The conditions of the quasi-Trefftz space are formulated in a way to allow for recursive construction of the basis functions. It is possible to write this space in the way of \eqref{eq:weakTspace}, see \cref{rem:qtrefftz}. However, this is not the most practical way to implement the embedding. Thus, we proceed with the construction proposed in \cref{eq:weakTproj,def:W2}.}
While an explicit construction of the weak Trefftz space is unfeasible, the embedded Trefftz method is still able to project on such a space. 
We construct the weak Trefftz embedding as described in \cref{sec:method2,sec:algo}, using the following condition for the weak Trefftz space
$\inner{\calL (\bm\sigma,v),(\bm\tau,w)}=0,\ \forall (\bm\tau,w)\in (V^{p-1}(\Th))^{d+1}$.
The volume term in $\eqref{eq:dgwave}$ can be dropped when using the embedded Trefftz DG method, which we also did in the numerical examples in this section, cf. \cref{rem:qtrefftzterms}.

We consider the exact solution given by
\begin{align}\label{eq:qwaveex}
    &v=-\sqrt{2\kappa(\kappa-1)}(x+y+1)^\kappa \ee^{-\sqrt{2\kappa(\kappa-1)}t}, \quad
    \bm\sigma=\begin{pmatrix} -\kappa(x+y+1)^{\kappa-1}  \ee^{-\sqrt{2\kappa(\kappa-1)}t}\\ -\kappa(x+y+1)^{\kappa-1}  \ee^{-\sqrt{2\kappa(\kappa-1)}t} \end{pmatrix}
\end{align}
with $\kappa=2.5$ and wavespeed $c(x,y)=x+y+1$ on the space--time domain $\dom\times(0,1)$ with $\dom=(0,1)^2$.

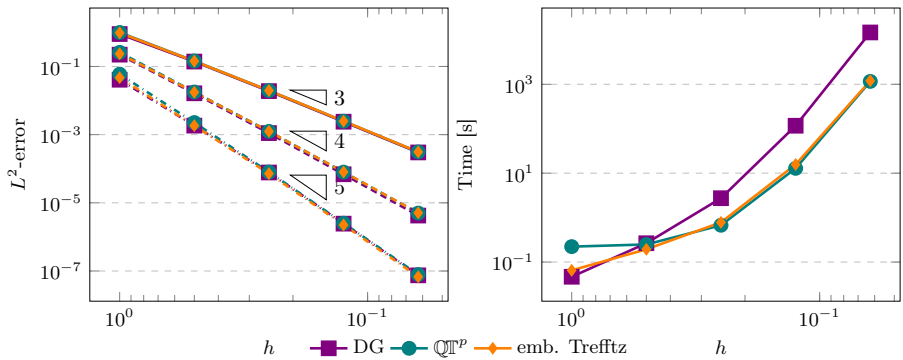
\begin{figure}[ht]
\resizebox{0.8\linewidth}{!}{
\begin{tikzpicture}
    \begin{groupplot}[%
      group style={%
        group size=2 by 1,
        horizontal sep=1.5cm,
      },
    legend style={
        legend columns=3,
        at={(-0.2,-0.1)},
        anchor=north,
        draw=none
    },
    ymajorgrids=true,
    grid style=dashed,
    ]      
    \nextgroupplot[xmode=log,ymode=log, xlabel={$h$}, x dir=reverse, ylabel=$L^2$-error, cycle list name=paulcolors]
    \foreach \p in {2,3,4}{
        \addplot+[discard if not={p}{\p}] table [x=h, y=error, col sep=comma] {results/qwave2d24.csv};
        \addplot+[discard if not={p}{\p}] table [x=h, y=terror, col sep=comma] {results/qwave2d24.csv};
        \addplot+[discard if not={p}{\p}] table [x=h, y=svdterror, col sep=comma] {results/qwave2d24.csv};
    }
    \logLogSlopeTriangle{0.66}{0.1}{0.72}{3}{black}; 
    \logLogSlopeTriangle{0.66}{0.1}{0.58}{4}{black};
    \logLogSlopeTriangle{0.66}{0.1}{0.43}{5}{black};
    \nextgroupplot[xmode=log,ymode=log, xlabel={$h$}, x dir=reverse, ylabel={Time [s]}, cycle list name=paulcolors,]
    \foreach \p in {4}{
        \addplot+[discard if not={p}{\p}] table [x=h, y=time, col sep=comma] {results/qwave2d24.csv};
        \addplot+[discard if not={p}{\p}] table [x=h, y=ttime, col sep=comma] {results/qwave2d24.csv};
        \addplot+[discard if not={p}{\p}] table [x=h, y=svdttime, col sep=comma] {results/qwave2d24.csv};
    }
    \legend{\DGmethod, \QTmethod, \ETmethod}
    \end{groupplot}
\end{tikzpicture}}
\caption{Numerical results for the wave equation with smooth coefficient in 2+1 dimensions, the exact solution given by \eqref{eq:qwaveex}. On the left: $h$-convergence for $p=2,3,4$ corresponding to full, dashed and dash-dotted line. On the right: timings for $p=4$ \rv{on 24 threads}.}
\label{fig:qwave}
\end{figure}
In \cref{fig:qwave} we observe a similar performance as for the homogeneous case.
When looking carefully we can now see a difference between Quasi-Trefftz and embedded Trefftz DG method.
While in the homogeneous case the embedded Trefftz space coincides with the Trefftz space, in the inhomogeneous case the two spaces do not coincide, compare \eqref{eq:weakTspace} and \eqref{eq:QT}.
However, the asymptotics seem to be unaffected and all methods perform equally well in terms of accuracy on a given mesh. 
The timings are again in agreement with the experience from the homogeneous case.

\begin{remark}\label{rem:qtrefftzterms}
    The volume term in \eqref{eq:dgwave} can be omitted for the case of homogeneous media when using Trefftz or embedded Trefftz DG methods. 
    In the inhomogeneous case, in \cite{qtrefftz} the quasi-Trefftz methods also require the volume terms for stability, however, even in the case of smooth coefficients, these terms can still be omitted when using embedded Trefftz DG method as described in \cref{sec:qwave}.
    Note that in \cite{qtrefftz,BMPS20} there appear additional Galerkin-least squares terms needed for the analysis, however, already in \cite{qtrefftz} it was observed that they do not seem to play a significant role in the numerical examples, which is why they are neglected here.
    \end{remark}
    
\subsection{Helmholtz equation}\label{sec:helm}
We now switch to the time-harmonic case of wave equations and consider the Helmholtz equation with Robin boundary conditions
\begin{align*}
    \begin{cases}
    -\Delta u - \omega^2 u= 0 &\text{ in } \dom, \\
    \frac{\partial u}{\partial \nx} + i u = g &\text{ on } \partial \dom.
    \end{cases}
\end{align*}
We consider the DG-scheme used in \cite{cessenat1998application,ghp09,mps13,hmp11b,AndreaPhD} with (bi)linear forms
\begin{subequations}
\begin{align}
        a_h(u,v) &= \sum_{K\in\Th}\int_K \nabla u\nabla v-\omega^2 uv\ dV
        -\int_{\Fh^\text{int}}\left(\avg{\nabla u}\jump{v}+\jump{u} \avg{\overline{\nabla v}} \right) dS \nonumber \\ 
                 &\qquad+\int_{\Fh^\text{int}} \left( i\alpha \omega\jump{u}\jump{\overline{v}} - \frac{\beta}{i\omega}\jump{\nabla u}\jump{\overline{\nabla v}} \right) dS -\int_{\Fh^\text{bnd}}\delta\left(\nx\cdot\nabla u \overline{v}+u \overline{\nx\cdot\nabla v}\right) dS\\ \nonumber
                 &\qquad+\int_{\Fh^\text{bnd}} \left( i(1-\delta)\omega{u}{\overline{v}} - \frac{\delta}{i\omega}{\nabla u}{\overline{\nabla v}} \right) dS \\ 
        \ell(v) &= \int_{\Fh^\text{bnd}}\left( (1-\delta)g\overline{v} - \frac{\delta}{i\omega}g\overline{\nx\cdot\nabla v}\right) dS
\end{align}
\end{subequations}
with the choices $\alpha=\beta=\delta=0.5$ used in \cite{cessenat1998application}.
A Trefftz space for the Helmholtz equation in two dimensions is given by the (non-polynomial) space of plane wave functions
\begin{align}\label{eq:pwspace}
    \IT^p=\{e^{-i\omega(d_j\cdot \bm x)} \sst j=-p,\dots,p\}.
\end{align}
The combination of DG (bi)linear forms and this trial and test space is known as plane wave DG method. 
In the numerical experiment we consider the exact solution given by 
\begin{align}\label{eq:helmex}
    & u=H_0^{(1)}(\omega|\bm x-\bm x_0|),\quad \bm x_0=(-0.25,0), && \dom=(0,1)^2.
\end{align}
where $H_0^{(1)}$ is the zero-th order Hankel function of the first kind.
Results are shown in \cref{fig:helm} \rv{for $\omega=1$.}
Surprisingly, the embedded Trefftz DG solution is extremely close to the plane wave DG method although the one space is piecewise polynomial and the other one is not. 
While all methods exhibit the same convergence rate, both Trefftz methods yield the significantly smaller error compared to the DG method. 
This can certainly not be attributed to different approximation spaces as the embedded Trefftz DG space is a subspace of the DG space. Hence, the results suggest that the embedded Trefftz method has better stability properties than the standard DG method, probably comparable to that of the plane wave DG method, cf. also \cref{rem:tdgpwdg}.

In \cref{fig:helm} on the right, we consider $p$-convergence on two different meshes. 
On the finer mesh, with $h=2^{-3}$, the plane waves fail to converge for $p=5$. 
We have implemented the simplest form of the plane wave basis functions, given by the functions in \eqref{eq:pwspace}, which are notoriously haunted by ill-conditioning. 
While more stable constructions for the plane wave basis exist, see for example \cite[Sec. 3.4.1]{AndreaPhD}, it is interesting to note that the embedded Trefftz space shows improved conditioning without any additional effort.

\begin{figure}[ht]
\begin{center}    
\resizebox{0.8\linewidth}{!}{
    \begin{tikzpicture}[spy using outlines={circle, magnification=4, size=2cm, connect spies}]
    \begin{groupplot}[%
      group style={%
        group name={my plots},
        group size=2 by 1,
        horizontal sep=1.5cm,
      },
    legend style={
        legend columns=4,
        at={(-0.1,-0.1)},
        anchor=north,
        draw=none
    },
    xlabel={$h$},
    ymajorgrids=true,
    grid style=dashed,
    cycle list name=paulcolors,
    ]      
    \nextgroupplot[ymode=log,xmode=log,x dir=reverse, ylabel={$L^2$-error}]
        \foreach \p in {3,4}{
            \addplot+[discard if not={p}{\p}] table [x=h, y=error, col sep=comma] {results/helmholtz2d1.csv};
            \addplot+[discard if not={p}{\p}] table [x=h, y=terror, col sep=comma] {results/helmholtz2d1.csv};
            \addplot+[discard if not={p}{\p}] table [x=h, y=svdterror, col sep=comma] {results/helmholtz2d1.csv};
        }
        \logLogSlopeTriangle{0.8}{0.1}{0.6}{4}{black}; 
        \logLogSlopeTriangle{0.8}{0.1}{0.2}{5}{black};
    \nextgroupplot[ymode=log, xlabel={$p$},ylabel={$L^2$-error}]
        \foreach \h in {0.25,0.125}{
            \addplot+[discard if not={h}{\h}] table [x=p, y=error, col sep=comma] {results/helmholtz2d1.csv};
            \addplot+[discard if not={h}{\h}] table [x=p, y=terror, col sep=comma] {results/helmholtz2d1.csv};
            \addplot+[discard if not={h}{\h}] table [x=p, y=svdterror, col sep=comma] {results/helmholtz2d1.csv};
        };
        \legend{\DGmethod, \Tmethod, \ETmethod}
        \end{groupplot}
        \spy[purple,size=2cm] on (4.07,1.6) in node [fill=white] at (1.4,1.2);            
        \spy[purple,size=2cm] on (12.47,0.43) in node [fill=white] at (8.6,1.2);            
\end{tikzpicture}}
\end{center} \vspace*{-0.25cm}
\caption{Numerical results for Helmholtz equation for the exact solution given in \eqref{eq:helmex}. On the left: $h$-convergence for $p=2,3$ corresponding to the full and dashed line. On the right: $p$-convergence for $h=2^{-2},2^{-3}$ corresponding to the full and dashed line, respectively.}
    \label{fig:helm}
\end{figure}
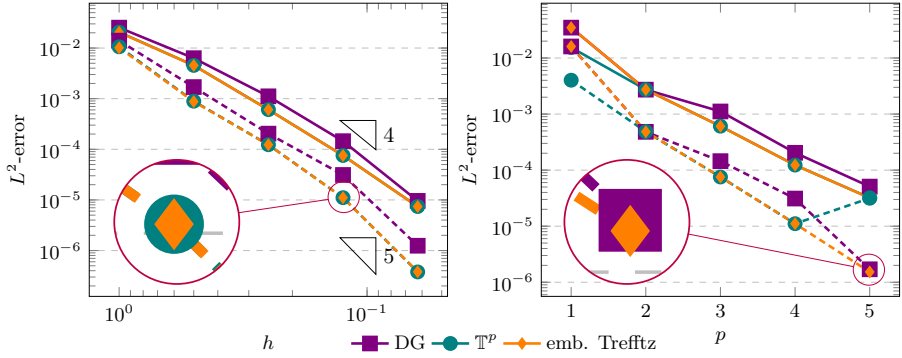

\begin{remark}[Approximation of plane wave functions with the embedded Trefftz DG space] \label{rem:tdgpwdg}
The remarkable proximity of both Trefftz DG method in the previous experiment may suggest that the embedded Trefftz space approximates the plane wave DG space.
We investigate this in more detail for a one-dimensional example. On $[0,1]$ we consider the embedded Trefftz space for $p=1,...,5$. The embedded Trefftz space is only two-dimensional as in 1D $\dim V^p - \dim V^q = 2$ (independent of $p$). On this space we approximate the two plane wave functions $\sin(\omega x)$ and $\cos(\omega x)$ for $\omega=2\pi$. The result is shown in \cref{fig:helmshapes} (top row). 
For comparison we approximate $\sin(\omega x)$ and $\cos(\omega x)$ on the (much) larger space $\Vhp$ (bottom row).
We observe that 
starting from $p=2$ the approximation quality of the embedded Trefftz space is close to the that of the DG space and 
for $p = 5$ the plane waves are extremely well resolved rendering the embedded Trefftz space in close proximity to the plane wave space. 
\end{remark}
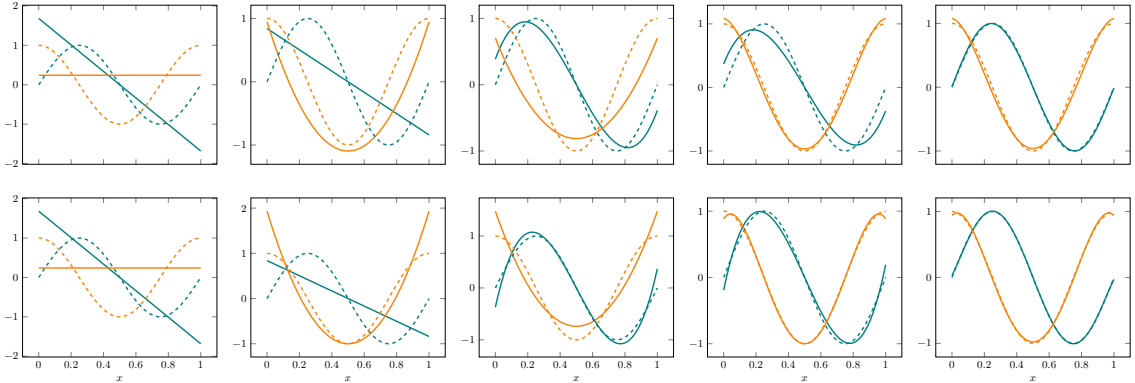
\begin{figure}[ht]
\resizebox{\linewidth}{!}{
\begin{tikzpicture}
    \begin{groupplot}[%
      group style={%
        group name=my plots,
        group size=5 by 2,
        xlabels at=edge bottom,
        x descriptions at=edge bottom,
      },
    xlabel={$x$},
    cycle list name=paulcolors2,
    ]      
    \def\myPlots{}
        \pgfplotsforeachungrouped \p in {1,2,3,4,5}{
        \eappto\myPlots{%
            \noexpand\nextgroupplot
            \noexpand\addplot+[] table [mark=none, x=px, y=p\p_tgfu0, col sep=comma] {results/helmshapes.csv};
            \noexpand\addplot+[] table [mark=none, x=px, y=p\p_tgfu1, col sep=comma] {results/helmshapes.csv};
            \noexpand\addplot+[dashed] table [mark=none, x=px, y=p\p_u0, col sep=comma] {results/helmshapes.csv};
            \noexpand\addplot+[dashed] table [mark=none, x=px, y=p\p_u1, col sep=comma] {results/helmshapes.csv};
        } 
        }
        \pgfplotsforeachungrouped \p in {1,2,3,4,5}{
        \eappto\myPlots{
            \noexpand\nextgroupplot
            \noexpand\addplot+[] table [mark=none, x=px, y=p\p_gfu0, col sep=comma] {results/helmshapes.csv};
            \noexpand\addplot+[] table [mark=none, x=px, y=p\p_gfu1, col sep=comma] {results/helmshapes.csv};
            \noexpand\addplot+[dashed] table [mark=none, x=px, y=p\p_u0, col sep=comma] {results/helmshapes.csv};
            \noexpand\addplot+[dashed] table [mark=none, x=px, y=p\p_u1, col sep=comma] {results/helmshapes.csv};
        } 
        }
        \myPlots
    \end{groupplot}
\end{tikzpicture}}
\vspace*{-0.5cm}
\caption{Approximation of the real part of plane wave basis functions (dashed line) in 1 dimension by the embedded Trefftz basis 
in the top row and by full polynomial space in the bottom row (solid line)
, for $p=1,\dots,5$.}
    \label{fig:helmshapes}
\end{figure}

\subsection{Linear transport equation}\label{sec:adv}
In this section we finally consider an example that is typically not related to Trefftz method: A scalar linear transport problem, the advection equation. It reads as
\begin{align*}
   \bb\cdot\nabla  u &= f\quad \text{ in } \Omega,\\  
    u &= u_D \quad \text{ on }  \partial \Omega_{\text{in}} :=\{\bx\in\partial \Omega\mid \bb\cdot\nx < 0\}.
\end{align*} 
for a given velocity field $\bb$ which we assume to be divergence-free. 
As underlying DG discretization we choose the standard Upwind DG formulation which reads as
\begin{subequations}
    \label{eq:dgupw}    
\begin{align}
    a_h(u,v) &= \sum_{K\in\Th} \Big\{  \int_K  - u ~ \bb\cdot \nabla v\ dV +\int_{\partial K\setminus {\partial \Omega}{\text{in}} } \bb\cdot\nx\hat{u} v~  dS   \Big\}  \\ 
    \ell(v) &= \sum_{K\in\Th} \int_K  f v\ dV -
    \int_{{\partial \Omega}{\text{in}}} \bb \cdot \nx u_D v~ dS
\end{align}
\end{subequations} 
Here, we used the upwind notation $\hat{u}(\bx) = \lim_{t\to 0^+}  u(\bx - \bb t )$.   
We do not assume $\bb$ to be piecewise constant so that there will not be a suitable polynomial Trefftz space in general.  
To $\calL =\bb\cdot\nabla $ we hence choose the weak Trefftz space with $V^q$ with $q=p-1$.
The local Trefftz space on an element $K$ will hence have the dimension $ M = \#\mathcal{P}^p(K) - \#\mathcal{P}^{p-1}(K)$ which in 1D is $M=1$, in 2D is $M=p+1$ and in 3D is $(p+1)(p+2)/2$. In comparison to scalar second order problems we hence only have approximately half the degrees of freedoms in the resulting weak Trefftz space. In \cref{fig:advshapes} we display a set of weak Trefftz basis functions that is obtained in 2D for a non-constant flow field and $p=4$. The basis functions are approximately - but not exactly - constant along the flow trajectories.
\begin{figure}
    \begin{center}
        \hspace*{-0.02\textwidth}
        \includegraphics[width=0.215\textwidth]{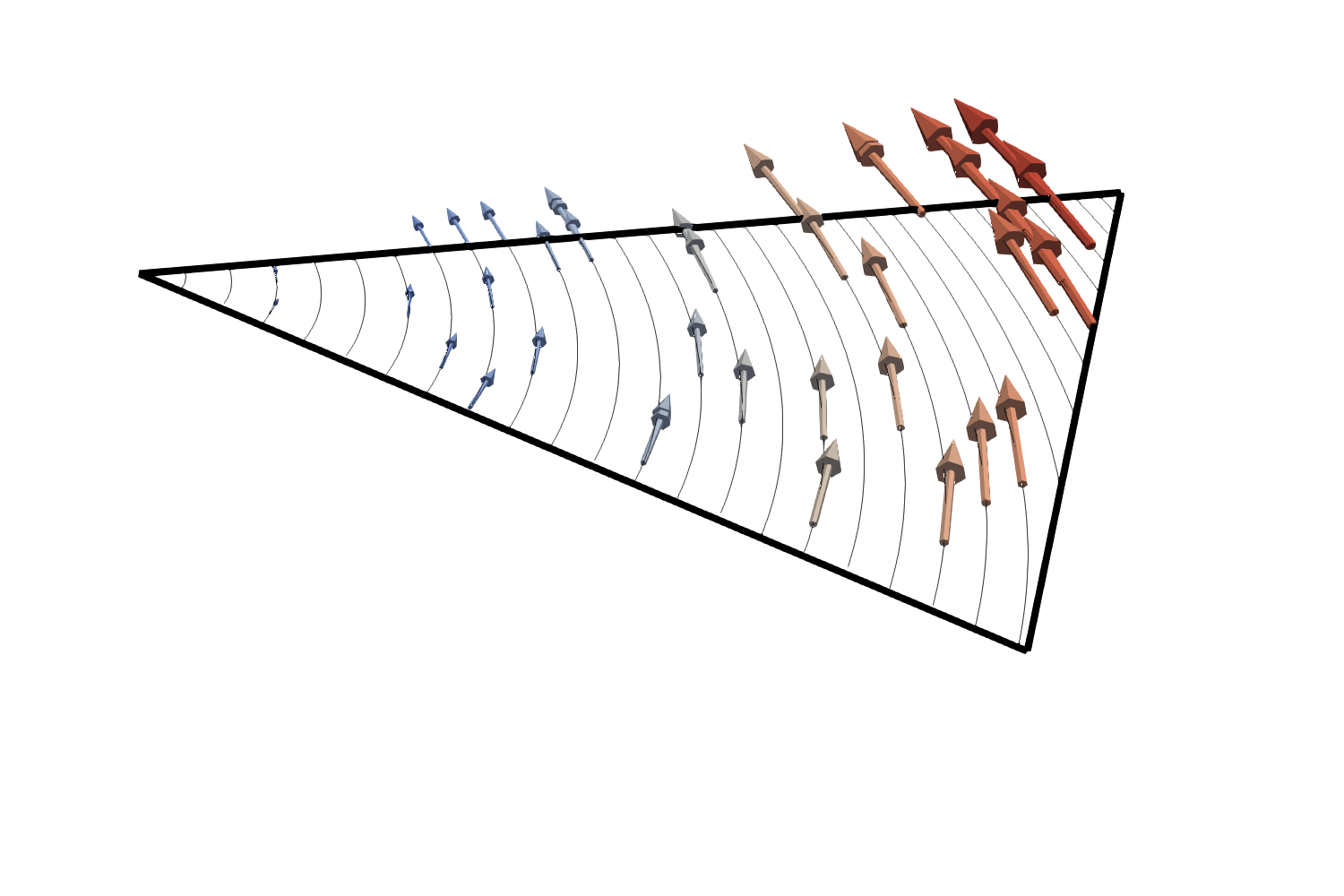}
        \hspace*{-0.0725\textwidth}
        \includegraphics[width=0.215\textwidth]{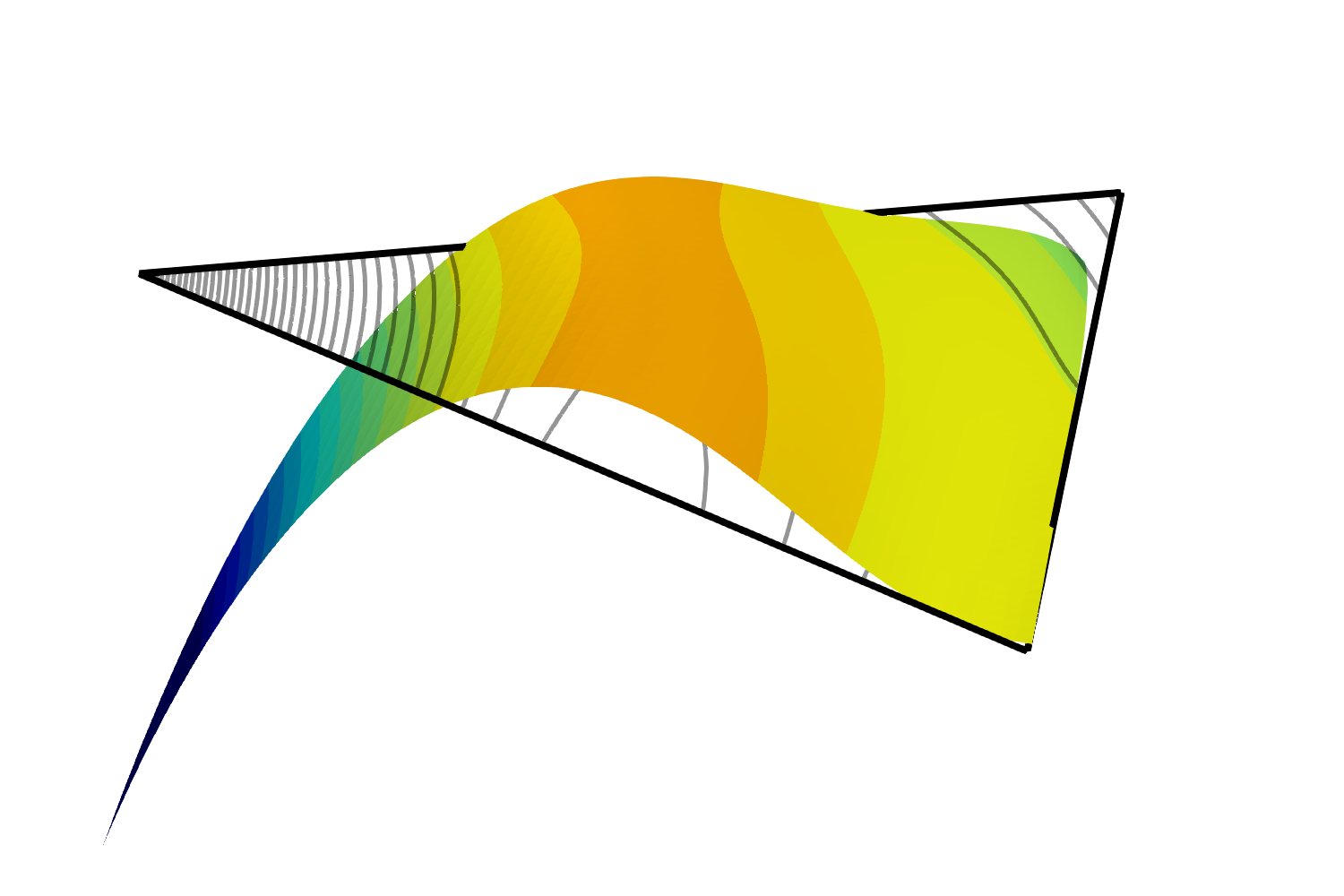}
        \hspace*{-0.0725\textwidth}
        \includegraphics[width=0.215\textwidth]{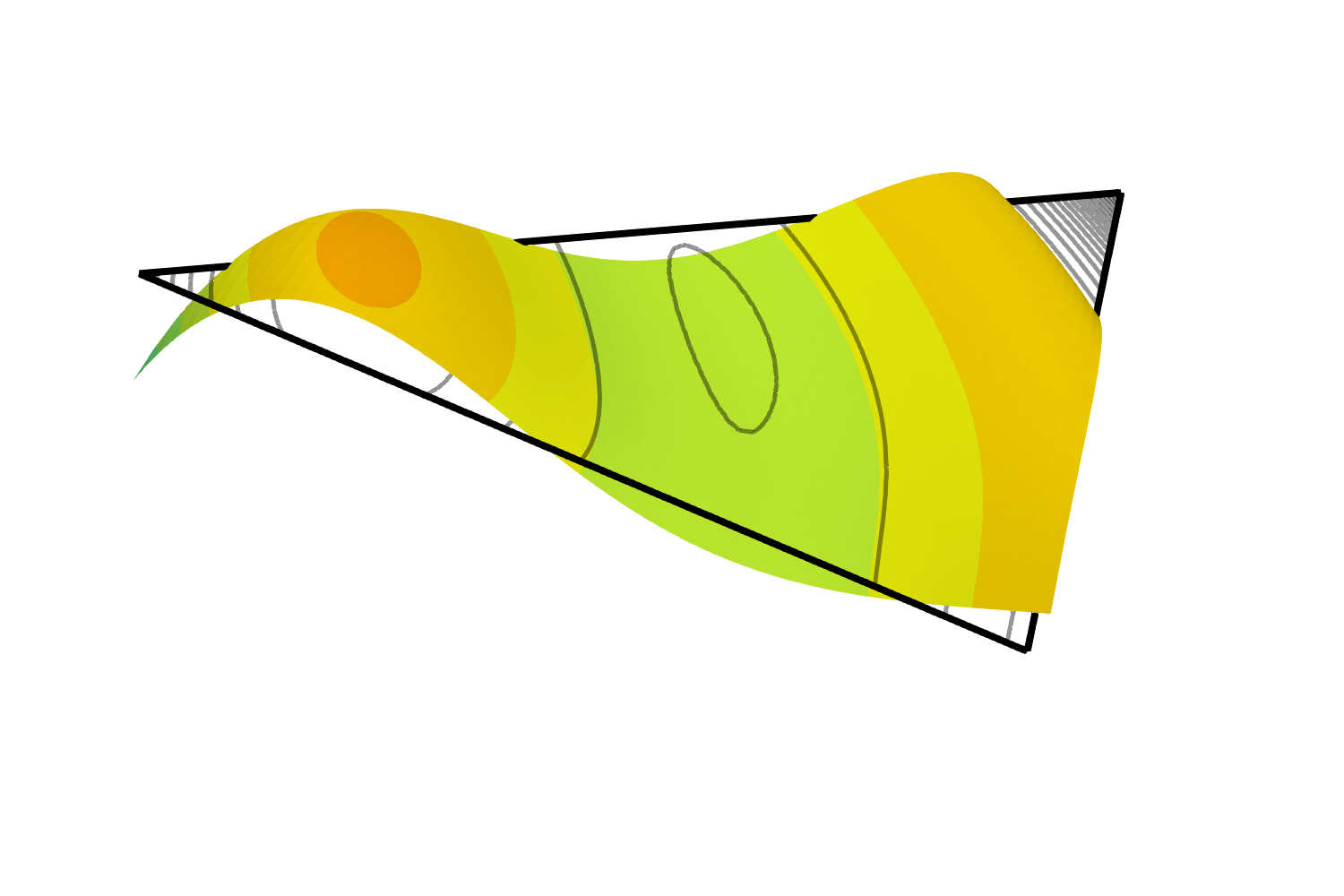}
        \hspace*{-0.0725\textwidth}
        \includegraphics[width=0.215\textwidth]{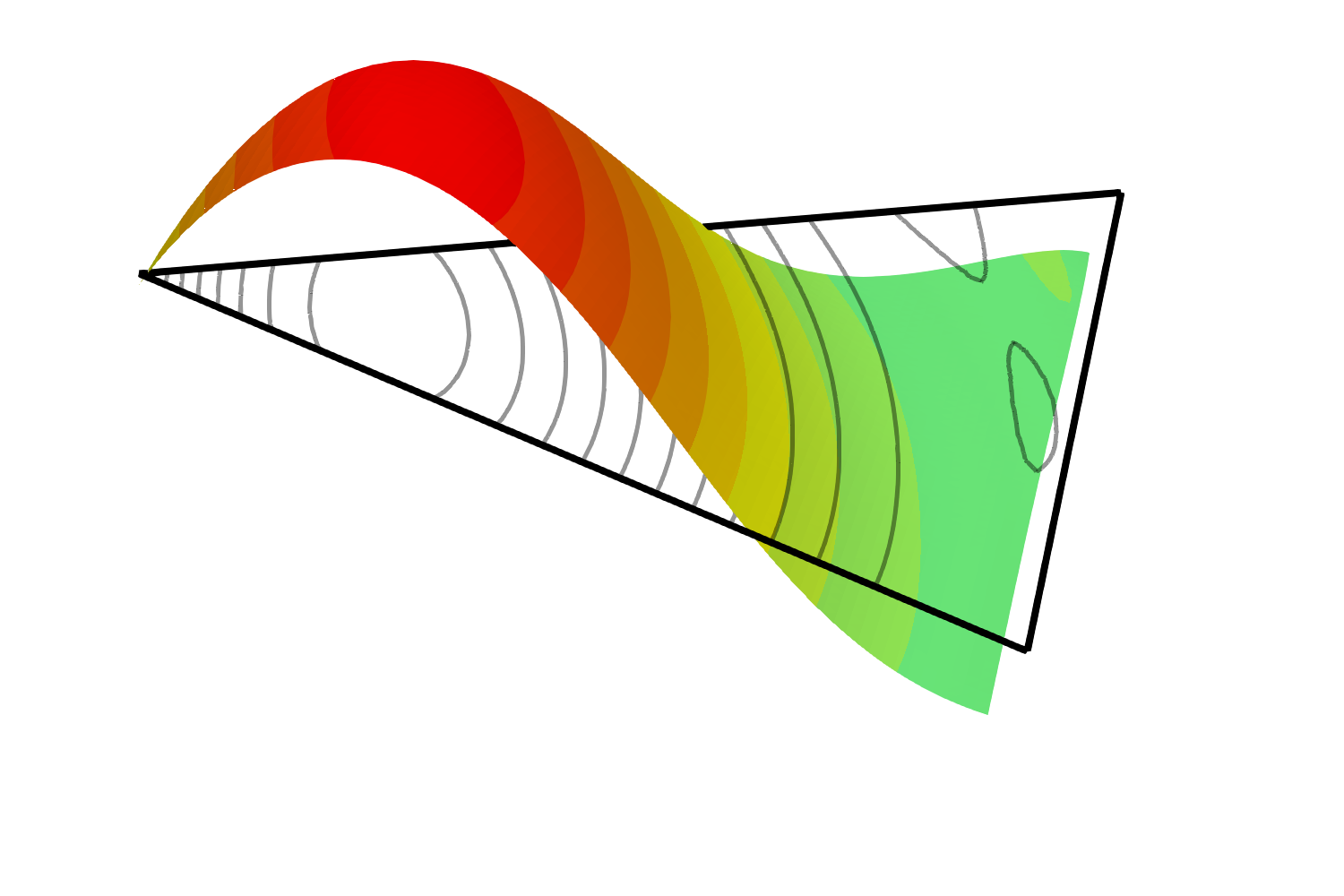}
        \hspace*{-0.0725\textwidth}
        \includegraphics[width=0.215\textwidth]{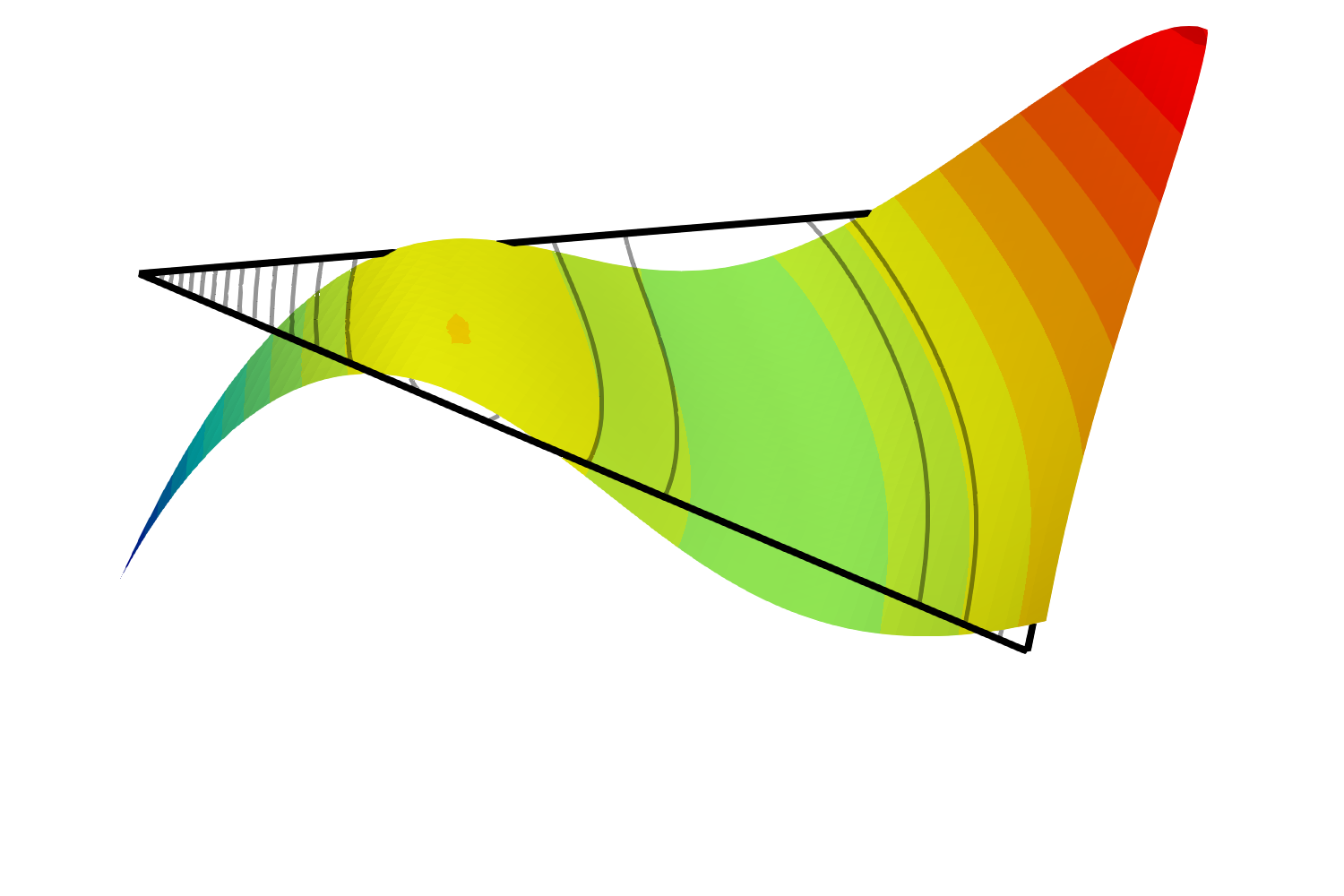}
        \hspace*{-0.0725\textwidth}
        \includegraphics[width=0.215\textwidth]{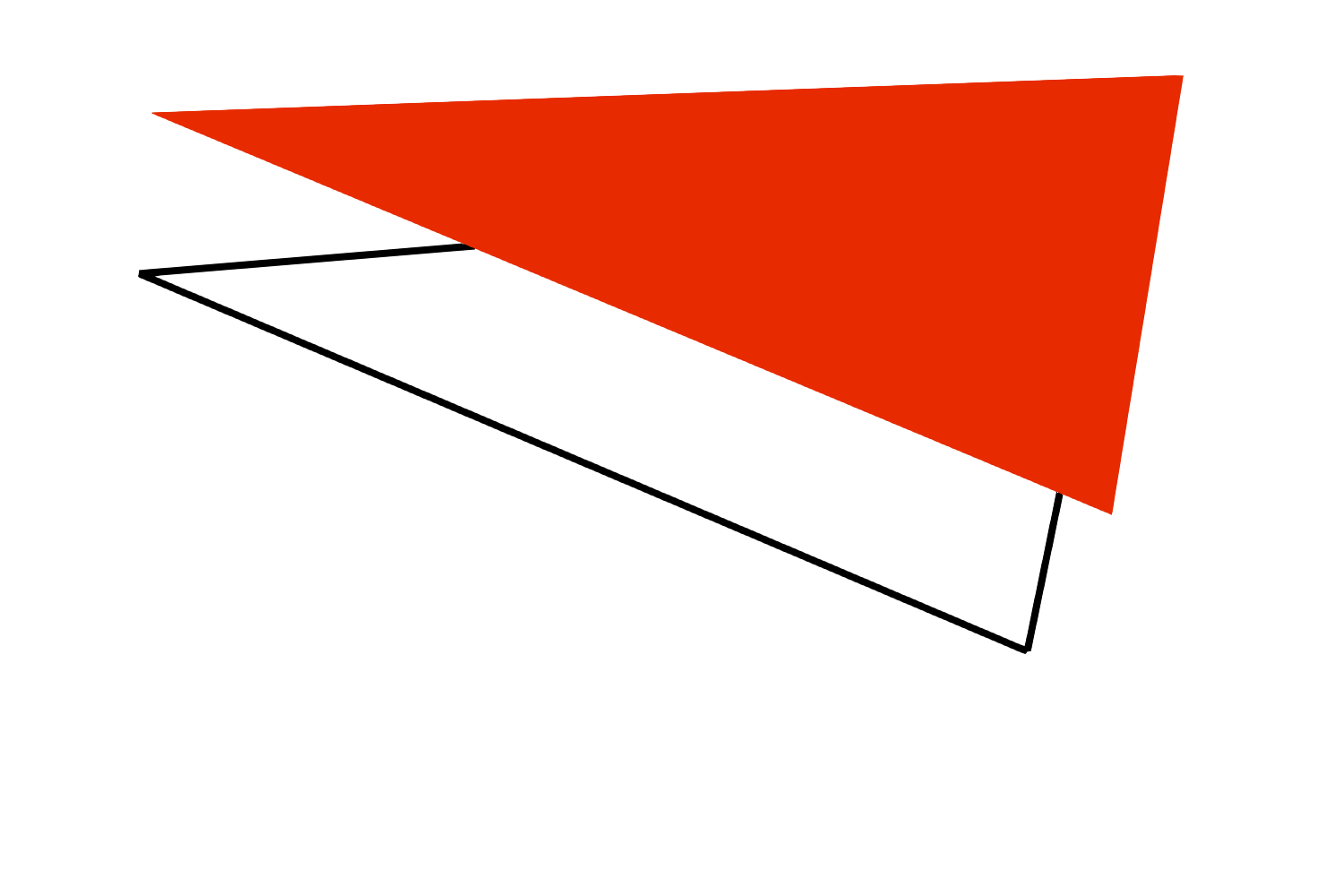}
        \hspace*{-0.02\textwidth}
    \end{center} \vspace*{-0.3cm} 
    \caption{(Non-constant) Flow field (left) and shape functions of the weak Trefftz space for $p=4$ for a triangle and $\calL =\bb\cdot\nabla$ as considered in \cref{sec:adv}.}
   \label{fig:advshapes}  
\end{figure}

For the numerical study we choose $\Omega = (0,1)^3$ and $\bb = (-\sin(x_2),\cos(x_1),x_1)^T$ and choose the r.h.s. data $f$ and $u_D$ so that $u =\sin(x_1)\sin(x_2)\sin(x_3)$ is the exact solution. Starting from a coarse simplicial and unstructured mesh with mesh size $h\approx 0.5$ we apply successive uniform refinements to compare the standard DG and the weak embedded Trefftz DG method.

\begin{figure}[ht]
    \resizebox{0.9\linewidth}{!}{
    \begin{tikzpicture} [spy using outlines={circle, magnification=4, size=2cm, connect spies}]
        \begin{groupplot}[%
          group style={%
            group name={my plots},
            group size=2 by 1,
            horizontal sep=1.5cm,
          },
        legend style={
            legend columns=6,
            at={(-0.2,-0.2)},
            anchor=north,
            draw=none
        },
        xlabel={$h$},
        ymajorgrids=true,
        grid style=dashed,
        cycle list name=paulcolors2,
        ]      
        \nextgroupplot[ymode=log,xmode=log,x dir=reverse, ylabel={$L^2$-error}]
            \foreach \p in {3,4,5}{
                \addplot+[discard if not={p}{\p}] table [x=h, y=error, col sep=comma] {results/adv3d24.csv}; 
                \addplot+[discard if not={p}{\p}] table [x=h, y=svdterror, col sep=comma] {results/adv3d24.csv};
            }
            \logLogSlopeTriangle{0.7}{0.1}{0.75}{4}{black}; 
            \logLogSlopeTriangle{0.7}{0.1}{0.56}{5}{black};
            \logLogSlopeTriangle{0.7}{0.1}{0.36}{6}{black}; 
        \nextgroupplot[ymode=log,xmode=log,x dir=reverse, ylabel={Time [s]}]
            \foreach \p in {3,4,5}{
                \addplot+[discard if not={p}{\p}] table [x=h, y=time, col sep=comma] {results/adv3d24.csv}; 
                \addplot+[discard if not={p}{\p}] table [x=h, y=svdttime, col sep=comma] {results/adv3d24.csv};
            }
            \legend{\DGmethod ($p=3$),\ETmethod ($p=3$),\DGmethod ($p=4$),\ETmethod ($p=4$),\DGmethod ($p=5$),\ETmethod ($p=5$)}
            \end{groupplot}
            \spy[purple,size=2cm] on (5.25,0.45) in node [fill=white] at (1.2,1);            
    \end{tikzpicture}}
   \vspace*{-0.3cm}  
   \caption{Comparison of $L^2$-error (left) and timings (right) for standard DG method and embedded Trefftz DG method for $p=3,4,5$ in 3D \rv{using 24 threads}.}
        \label{fig:adv}
    \end{figure}
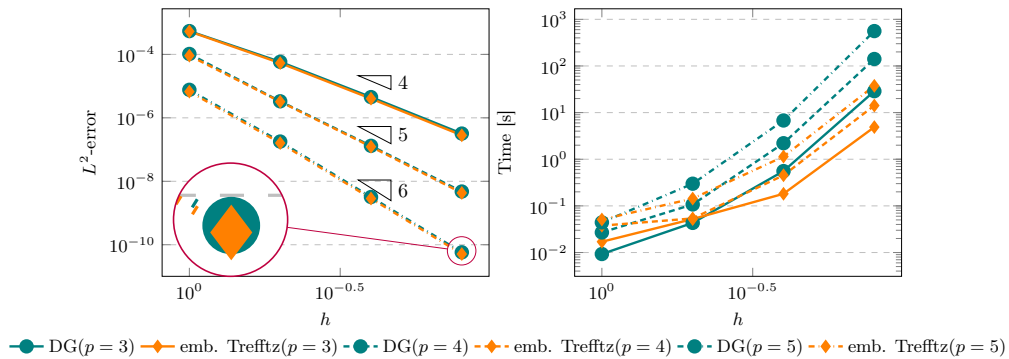
In the left half of \cref{fig:adv} we observe the convergence behavior of the embedded Trefftz DG method compared to the standard Upwind DG method for $p=3,4,5$. Both methods converge with optimal rate and we observe that there is only a marginal difference between the results. Moreover, a careful look at the numbers reveals that the solution of the embedded Trefftz DG method has a slightly smaller error on few occasions (see for instance the zoomed region in the left plot of \cref{fig:adv}). This may appear surprising at first glance as the approximation space is (by construction) smaller ($\IT^p(\Th) \subset\Vhp$ ), i.e. the approximation quality has not been improved. This suggests that the stability has been slightly improved in these cases. 
On the right half of \cref{fig:adv} we compare the runtime of the two approaches for $p=3,4,5$. We observe that on sufficiently fine meshes the costs associated to the two methods separate so that on the finest mesh level the computational costs differ already by more than an order of magnitude. Moreover, we observe that the embedded Trefftz DG method for $p=5$ is still much cheaper than the standard DG method for $p=4$ and only slightly more expensive than the standard DG method for $p=3$, i.e. we can see the step from DG to embedded Trefftz DG as a way to obtain also two orders of accuracy more for the same computation time, at least in the given example.

Although linear transport problems are not in the typical class of problems that are associated with Trefftz methods, the obtained results are very encouraging. Compared to the second order equations considered before the reduction of degrees of freedom is even higher and the gain in the computation time is remarkable.

\section{\rv{Comparison to Hybrid DG}} \label{sec:compareschemes}
We saw in the previous examples that the embedded Trefftz DG approach has two applications: First, it can be seen as an acceleration technique to reduce the costs of DG methods when solving linear systems. 
Second, the restriction of the DG space to a suitable subset can also have an effect on the stability of the method, cf. the Helmholtz problem in \cref{sec:helm}. For the former aspect alternative approaches exists. One technique that became very popular in the last decade is the class of Hybrid DG methods, see \cite{cockburn2016static,cockburn2009unified,egger2008hybrid}.
In the remainder of this section we want to shed
some light on the comparison between the two approaches.
To this end, we will first recap the structure of HDG method and make a rough conceptual comparison especially in terms of asymptotic complexity in \cref{sec:hdg:introduction} and afterwards 
compare the methods on the scalar examples from \cref{sec:poi} and \cref{sec:adv} and close the section with a rough conclusion. 

\subsection{Introduction} \label{sec:hdg:introduction}
In Hybrid DG methods additional unknowns are introduced on the element interfaces in order to allow for a decoupling of element unknowns. Element unknowns can then be removed from global linear systems by static condensation reducing the globally coupled \ndofs~(asymptotically for increasing $p$) from $\mathcal{O}(p^d)$ to   $\mathcal{O}(p^{d-1} )$ and thereby reducing the computational costs dramatically.

Let us sketch the structure of an HDG method. To this end, we restrict to a scalar PDE here, but extensions to the vector case are obvious. 
Let $F^p(\mathcal{F}_h)$ be the space of piecewise polynomials up to degree $p$ on each facet of
the mesh $\Th$ and $F_D^p(\mathcal{F}_h)$ and $F_0^p(\mathcal{F}_h)$ its subspaces with prescribed (inhomogeneous and homogeneous) values on Dirichlet-type boundaries. 
Then a typical discrete variational formulation of a Hybrid DG method is formulated in terms of the pair of volume and facet unknowns in the form:
Find $(u,u_F) \in V^p(\Th) \times F_D^p(\mathcal{F}_h)$ so that
\begin{equation} \label{eq:hdg}
a_h((u,u_F),(u,v_F)) =
\ell((v,v_F)) \text{ for all }(v,v_F) \in
V^p(\Th) \times F_0^p(\mathcal{F}_h).
\end{equation}
Here the bilinear form $a_h(\cdot,\cdot)$ can be very similar to a corresponding DG formulation with respect to the integral terms. However, direct couplings between volume unknowns of adjacent elements are avoided by involving the facet unknowns for the inter-element communication, so that the set of volume unknowns of one element is completely determined by the facet unknowns on the element boundary (and the r.h.s. data). This enables one of the main features of Hybrid DG schemes: the possibility to do \emph{static condensation}. 
This allows to eliminate the interior unknowns in $V^p(\Th)$ completely based on
the facet unknowns in $F^p(\mathcal{F}_h)$. This can be done on the level of the
variational formulation where with $u=u(u_F)$ one can formulate a discrete
formulation solely based on $u_F$ of the form: Find $u_F \in
F_D^p(\mathcal{F}_h)$ so that $a_h^\star(u_F,v_F) = \ell^\star(v_F)$ for
all $v_F \in F_0^p(\mathcal{F}_h)$. After solving for $u_F$ element-local problems can be solved to re-obtain $u(u_F) \in V^p(\Th)$. This elimination is often done merely on the linear algebra level based on a Schur complement strategy.
We note that in many Hybrid DG formulations for second order PDEs an auxiliary (flux) variable is also introduced locally which however can be eliminated alongside $u$ so that the structure of the global linear system for $u_F$ is not affected by this.

The asymptotic complexity of the Hybrid DG approach is the same as the one of the Embedded Trefftz DG method. The total \ndofs~is determined by the element unknowns, i.e. $\mathcal{O}(h^{-d}p^{d})$, the globally coupled \ndofs~ however only scales with $\mathcal{O}(h^{-d}p^{d-1})$. Similarly the local operations, i.e. the SVD or QR decomposition for the Embedded Trefftz or the Schur complement strategy require $\mathcal{O}(h^{-d} p^{3d})$ operations. Correspondingly, \nnzes and arithmetic operations for general purpose linear algebra solvers have the same complexity, cf. \cref{sec:complexity}.

We summarize that the Hybrid DG and the embedded Trefftz methods achieve the same asymptotic complexity, however the global unknowns are associated with element interfaces in the Hybrid DG case and associated with volume elements in the case of embedded Trefftz methods. In \cref{fig:schemecompare} we sketch the different $\dofs$ involved in the different methods for $p=3$.

In the first part of \cref{sec:algo} we have discussed two possible implementations of the embedded Trefftz-DG method. For the timings in this section we use the faster method, which avoids the assembly of the full DG matrix.

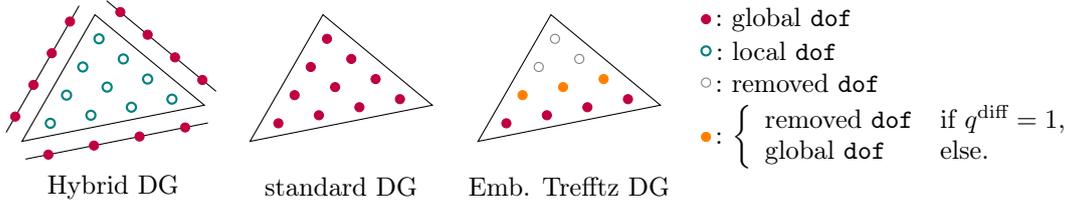
\begin{figure}\vspace*{-0.2cm}
  \begin{center}
    \begin{tikzpicture}[scale=0.6]
          \def\dirxx{1}
          \def\dirxy{0.2}
          \def\diryx{0.4}
          \def\diryy{0.7}

          \def\ax{0.2}
          \def\ay{0.2}

          \def\dd{0.4}

          \def\dirnxa{0.19611613513818404}
          \def\dirnya{-0.9805806756909202}

          \def\dirnxb{-0.8682431421244592}
          \def\dirnyb{0.49613893835683387}

          \def\dirnxc{0.6401843996644799}
          \def\dirnyc{0.7682212795973759}
          
          \draw (0,0) -- (4*\dirxx,4*\dirxy) -- (4*\diryx,4*\diryy) -- cycle;

          \draw (0+\dd*\dirnxa,0+\dd*\dirnya) -- (4*\dirxx+\dd*\dirnxa,4*\dirxy+\dd*\dirnya);

          \draw (0+\dd*\dirnxb,0+\dd*\dirnyb) -- (4*\diryx+\dd*\dirnxb,4*\diryy+\dd*\dirnyb);

          \draw (4*\diryx+\dd*\dirnxc,4*\diryy+\dd*\dirnyc) -- (4*\dirxx+\dd*\dirnxc,4*\dirxy+\dd*\dirnyc);

          \foreach \x/\y in {0.5/0, 1.5/0, 2.5/0, 3.5/0,
            0/1.33333, 1.33333/1.33333, 2.66666/1.33333,
            0/2.66666, 1.33333/2.66666,
            0/4}
          {
          }

          \foreach \x/\y in {0.44444/0.44444, 1.33333/0.44444, 2.22222/0.44444, 3.11111/0.44444,
            0.44444/1.33333, 1.33333/1.33333, 2.22222/1.33333,
            0.44444/2.22222, 1.33333/2.22222,
            0.44444/3.11111}
          {
            \draw[draw=teal,thick] (\x*\dirxx+\y*\diryx,\x*\dirxy+\y*\diryy) circle [radius=0.1cm] ;
          }

          \foreach \x/\y/\z in {0.5/0, 1.5/0, 2.5/0, 3.5/0}
          {
            \draw[draw=purple,fill=purple] (\x*\dirxx+\y*\diryx+\dd*\dirnxa,\x*\dirxy+\y*\diryy+\dd*\dirnya) circle [radius=0.1cm] ;
          }

          \foreach \x/\y/\z in {0/0.5, 0/1.5, 0/2.5, 0/3.5}
          {
            \draw[draw=purple,fill=purple] (\x*\dirxx+\y*\diryx+\dd*\dirnxb,\x*\dirxy+\y*\diryy+\dd*\dirnyb) circle [radius=0.1cm];
          }

          \foreach \x/\y in {3.5/0.5, 2.5/1.5, 1.5/2.5, 0.5/3.5}
          {
            \draw[draw=purple,fill=purple] (\x*\dirxx+\y*\diryx+\dd*\dirnxc,\x*\dirxy+\y*\diryy+\dd*\dirnyc) circle [radius=0.1cm] ;
          }

           \def\offset{5}

          \draw (\offset,0) -- (\offset+4*\dirxx,4*\dirxy) -- (\offset+4*\diryx,4*\diryy) -- cycle;

          \foreach \x/\y in {0.44444/0.44444, 1.33333/0.44444, 2.22222/0.44444, 3.11111/0.44444,
            0.44444/1.33333, 1.33333/1.33333, 2.22222/1.33333,
            0.44444/2.22222, 1.33333/2.22222,
            0.44444/3.11111}
          {
            \draw[draw=purple,fill=purple] (\offset+\x*\dirxx+\y*\diryx,\x*\dirxy+\y*\diryy) circle [radius=0.1cm] ;
          }

          \def\offset{10}

          \draw (\offset,0) -- (\offset+4*\dirxx,4*\dirxy) -- (\offset+4*\diryx,4*\diryy) -- cycle;

        \foreach \x/\y in {0.44444/0.44444, 1.33333/0.44444, 2.22222/0.44444, 3.11111/0.44444}
        {
            \draw[draw=purple,fill=purple] (\offset+\x*\dirxx+\y*\diryx,\x*\dirxy+\y*\diryy) circle [radius=0.1cm] ;
        }
        \foreach \x/\y in {
        0.44444/1.33333, 1.33333/1.33333, 2.22222/1.33333}
        {
            \draw[draw=orange,fill=orange] (\offset+\x*\dirxx+\y*\diryx,\x*\dirxy+\y*\diryy) circle [radius=0.1cm] ;
        }

          \foreach \x/\y in {
            0.44444/2.22222, 1.33333/2.22222,
            0.44444/3.11111}
          {
            \draw[draw=gray] (\offset+\x*\dirxx+\y*\diryx,\x*\dirxy+\y*\diryy) circle [radius=0.1cm] ;
          }
          \draw[draw=purple,fill=purple] (15,2.7) circle [radius=0.1cm] node[right] {:~global \texttt{dof}} ;
          \draw[draw=teal,thick] (15,2.0) circle [radius=0.1cm] node[right] {:~local \texttt{dof}} ;
          \draw[draw=gray] (15,1.3) circle [radius=0.1cm] node[right] {:~removed \texttt{dof}} ;
          \draw[draw=orange,fill=orange] (15,0.1) circle [radius=0.1cm] node[right] {:~$\left\{ \begin{array}{ll} \text{removed \texttt{dof}} & \text{if } q^{\text{diff}} = 1, \\ 
          \text{global \texttt{dof}} & \text{else.} \end{array} \right.$ };
          \node[](hdg) at (2,-1) {Hybrid DG};
          \node[](dg) at (7,-1) {standard DG};
          \node[](tdg) at (12,-1) {Emb. Trefftz DG};
        \end{tikzpicture}
  \end{center}
  \vspace*{-0.25cm}
  \caption{Sketch of global/local \dofs~ of Hybrid DG, standard DG and Embedded Trefftz DG for 2D simplex elements and polynomials degree $p=3$. Here $q^{\text{diff}}$ is the maximum differentiation index of $\mathcal{L}$.}
  \label{fig:schemecompare}
\end{figure}

\subsection{Poisson equation}
To compare the Hybrid DG method with the Embedded Trefftz DG method on a concrete example, we reconsider the example from \cref{sec:poi} and provide a simple hybrid DG discretization for it.
A hybrid DG analogue to the interior penalty formulation takes the form of \eqref{eq:hdg} with
\begin{align}\label{eq:hdglap}
  \begin{split}
    a_h((u,u_F),(u,v_F))
    &= \sum_{K \in \Th} \Big\{ \int_K \nabla u\nabla v\ dV -\int_{\partial K} \nabla u \cdot n_K (u-u_F)  + \nabla v \cdot n_K (v-v_F) \ dS \\
    & \qquad\qquad
        + \int_{\partial K}  \frac{\alpha p^2}{h}(u-u_F)(v-v_F)\ dS \Big\}\\
    \ell((v,v_F)) &=  \int_{\dom} f v \ dV 
  \end{split}
\end{align}
Note that there are no average and jump operators in \eqref{eq:hdglap} that directly involve neighboring element functions. Further, Dirichlet-type boundary conditions are directly imposed on the facet space which leads to the usual weak imposition of them on the element unknowns.
As for the DG formulation chosen in \cref{sec:poi} let us also mention that the Hybrid DG formulation is only one specific version (and a specifically simple one).

For second order elliptic and diffusion dominated problems, based on the similarity to (hybrid) mixed methods, element-local postprocessing schemes can be devised to obtain an additional order of accuracy for a postprocessed solution field. We neglect this aspect here but refer to \cite{LS_CMAME_2016,L_MTH_2010, oikawa2015hybridized, cockburn2016static} for more details.

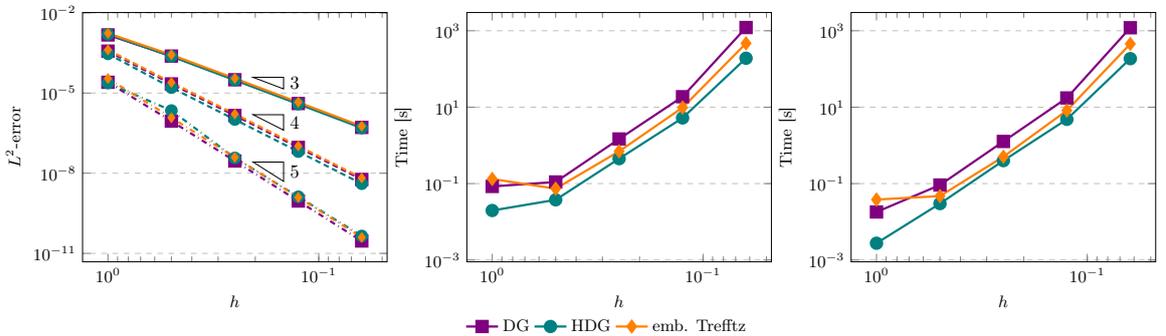
\begin{figure}[ht]
    \begin{center}
\hspace*{-0.02\textwidth}\resizebox{1.025\linewidth}{!}{
\begin{tikzpicture}
    \begin{groupplot}[%
      group style={%
        group size=3 by 1,
        horizontal sep=1.5cm,
      },
    legend style={
        legend columns=4,
        at={(-0.8,-0.2)},
        anchor=north,
        draw=none
    },
    ymajorgrids=true,
    grid style=dashed,
    ]      
    \nextgroupplot[xmode=log,ymode=log, xlabel={$h$}, x dir=reverse, ylabel=$L^2$-error, cycle list name=paulcolors]
    \foreach \p in {2,3,4}{
        \addplot+[discard if not={p}{\p}] table [x=h, y=l2error, col sep=comma] {results/poi3d12.csv};
        \addplot+[discard if not={p}{\p}] table [x=h, y=hdgerror, col sep=comma] {results/poi3d12.csv};
        \addplot+[discard if not={p}{\p}] table [x=h, y=svdt2error, col sep=comma] {results/poi3d12.csv};
    }
    \logLogSlopeTriangle{0.66}{0.1}{0.74}{3}{black}; 
    \logLogSlopeTriangle{0.66}{0.1}{0.59}{4}{black};
    \logLogSlopeTriangle{0.66}{0.1}{0.4}{5}{black};
    \nextgroupplot[xmode=log,ymode=log, xlabel={$h$}, x dir=reverse, ylabel={Time [s]}, cycle list name=paulcolors4,ymin=9*10^-4,ymax=3*10^3]
    \foreach \p in {4}{
      \addplot+[discard if not={p}{\p}] table [x=h, y=l2time, col sep=comma] {results/poi3d12.csv};
      \addplot+[discard if not={p}{\p}] table [x=h, y=hdgtime, col sep=comma] {results/poi3d12.csv};
      \addplot+[discard if not={p}{\p}] table [x=h, y=svdt2time, col sep=comma] {results/poi3d12.csv};
    }
    \nextgroupplot[xmode=log,ymode=log, xlabel={$h$}, x dir=reverse, ylabel={Time [s]}, cycle list name=paulcolors4,ymin=9*10^-4,ymax=3*10^3]
    \foreach \p in {4}{
      \addplot+[discard if not={p}{\p}] table [x=h, y=l2timeinv, col sep=comma] {results/poi3d12.csv};
      \addplot+[discard if not={p}{\p}] table [x=h, y=hdgtimeinv, col sep=comma] {results/poi3d12.csv};
      \addplot+[discard if not={p}{\p}] table [x=h, y=svdt2timeinv, col sep=comma] {results/poi3d12.csv};
    }
    \legend{\DGmethod, \HDGmethod, \ETmethod}
    \end{groupplot}
  \end{tikzpicture}}\hspace*{-0.005\textwidth}
\end{center}
\caption{Comparison between DG, Hybrid DG and embedded Trefftz DG method for the Poisson problem from \cref{sec:poi} in 3D. Left: $L^2$-error for $p \in \{2,3,4\}$ over successively refined meshes, center: computation time for $p=4$ with 12 threads, right: computation time for sparse matrix factorization $p=4$ with 12 threads.
}
\label{fig:poihdg}
\end{figure}

In \cref{fig:poihdg} we compare the computational results for the three methods, DG, Hybrid DG and Embedded Trefftz DG for fixed polynomial degrees and successively refined meshes. We observe that the differences in the error is negligible whereas the computation time of the Hybrid DG method is still smaller than the one of the Embedded Trefftz DG method.
Comparing the overall computation time with the time spend only to solve the global linear system we observe that this makes up for most of the time.

\subsection{Linear transport equation}
Let us now come to the first order example, the linear transport equation as in \cref{sec:adv}.
A Hybrid DG version of \eqref{eq:dgupw} takes the form of \eqref{eq:hdg} with 
\begin{align}\label{eq:hdgupw}
    a_h((u,u_F),(u,v_F))
    &= \sum_{K \in \Th} \Big\{ \int_K - u \bb \cdot \nabla v \ dV + \int_{\partial K } \bb\cdot\nx \hat{u} v~  dS
    + \int_{\partial K_{\text{out}} } \!\! \bb\cdot\nx (u_F - u) v_F ~  dS \Big\} \nonumber \\
    \ell((v,v_F)) &=  \int_{\dom} f v \ dV 
\end{align}
Here the upwind choice $\hat{u}$ is as follows: On inflow boundaries ($\bb \cdot \nx < 0$) we set $\hat{u} = u_F$, i.e. the facet value is taken whereas on outflow boundaries ($\bb \cdot \nx > 0$)  we set $\hat{u}=u$. To ensure that the facet value taken on inflow boundaries is meaningful, it is glued together with the outflow trace of the corresponding adjacent element by the stabilization term that include $v_F$. Note that testing with $v=0$ yields that $u_F$ is exactly the upwind trace at a corresponding facet. Hence, the hybrid Upwind DG formulation is equivalent to the Upwind DG formulation used above, cf. \cite{egger2008hybrid}. Note that inflow boundary conditions are prescribed through the facet space here and do not appear in the linear form.

\begin{figure}[ht]
    \begin{center}
\hspace*{-0.02\textwidth}\resizebox{1.025\linewidth}{!}{
\begin{tikzpicture}
    \begin{groupplot}[%
      group style={%
        group size=3 by 1,
        horizontal sep=1.5cm,
      },
    legend style={
        legend columns=3,
        at={(-0.8,-0.2)},
        anchor=north,
        draw=none
    },
    ymajorgrids=true,
    grid style=dashed,
    ]      
    \nextgroupplot[xmode=log,ymode=log, xlabel={$h$}, x dir=reverse, ylabel=$L^2$-error, cycle list name=paulcolors]
    \foreach \p in {2,3,4}{
        \addplot+[discard if not={p}{\p}] table [x=h, y=l2error, col sep=comma] {results/adv3d12.csv};
        \addplot+[discard if not={p}{\p}] table [x=h, y=hdgerror, col sep=comma] {results/adv3d12.csv};
        \addplot+[discard if not={p}{\p}] table [x=h, y=svdterror, col sep=comma] {results/adv3d12.csv};
    }
    \logLogSlopeTriangle{0.66}{0.1}{0.78}{3}{black}; 
    \logLogSlopeTriangle{0.66}{0.1}{0.58}{4}{black};
    \logLogSlopeTriangle{0.66}{0.1}{0.40}{5}{black};
    \nextgroupplot[xmode=log,ymode=log, xlabel={$h$}, x dir=reverse, ylabel={Time [s]}, cycle list name=paulcolors,]
    \foreach \p in {4}{
      \addplot+[discard if not={p}{\p}] table [x=h, y=l2time, col sep=comma] {results/adv3d12.csv};
      \addplot+[discard if not={p}{\p}] table [x=h, y=hdgtime, col sep=comma] {results/adv3d12.csv};
      \addplot+[discard if not={p}{\p}] table [x=h, y=svdttime, col sep=comma] {results/adv3d12.csv};
    }
    \nextgroupplot[xmode=log,ymode=log, xlabel={$h$}, x dir=reverse, ylabel={Time [s]}, cycle list name=paulcolors,]
    \foreach \p in {4}{
      \addplot+[discard if not={p}{\p}] table [x=h, y=l2timeinv, col sep=comma] {results/adv3d12.csv};
      \addplot+[discard if not={p}{\p}] table [x=h, y=hdgtimeinv, col sep=comma] {results/adv3d12.csv};
      \addplot+[discard if not={p}{\p}] table [x=h, y=svdttimeinv, col sep=comma] {results/adv3d12.csv};
    }
    \legend{\DGmethod, \HDGmethod, \ETmethod}
    \end{groupplot}
  \end{tikzpicture}}\hspace*{-0.005\textwidth}
\end{center}
\caption{Comparison between DG, Hybrid DG and embedded Trefftz DG method for the linear transport  problem from \cref{sec:adv} in 3D. Left: $L^2$-error for $p \in \{3,4,5\}$ over successively refined meshes, center: computation time for $p=5$ with 12 threads, right: computation time for sparse matrix factorization $p=5$ with 12 threads.
}
\label{fig:advhdg}
\end{figure}
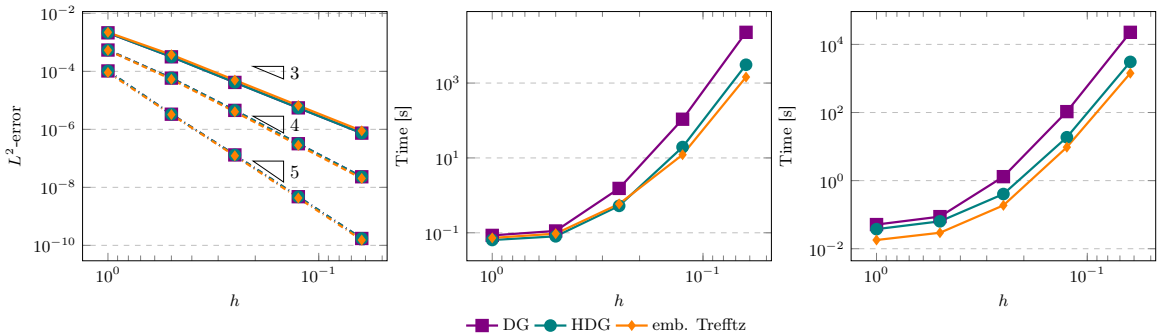

In \cref{fig:advhdg} we again compare DG, Hybrid DG and Embedded Trefftz DG methods. Again, the errors are comparable. In fact, DG and Hybrid DG are equivalent and hence yield exactly the same result. For the computation time we now observe that the Embedded Trefftz DG method is the fastest solution method as soon as the overhead in the system setup becomes negligible.

\subsection{Some number crunching}
In the previous two sections we considered first and second order scalar PDEs.
Next, we would like to compare the sparsity patterns of DG, HDG and Trefftz DG for a generic setting: 
We consider an unstructured simplicial mesh with 54 elements in 2D and 729 elements in 3D and compare different methods w.r.t. the \rv{number of degrees of freedom} (\ndofs)~and the \rv{number of non-zero entries} (\nnzes)~for $p=0,..,5$. We note that although \ndofs~is the simpler measure~\nnzes~has a more direct implication on the computational costs that are associated to solving linear systems.
In \cref{tab:hdg} A standard DG scheme, a corresponding HDG scheme and two embedded Trefftz DG schemes are compared. Here the embedded Trefftz DG schemes denoted as TDG(1) and TDG(2) are distinguished depending on the leading order of the differential operator involved. For first order problems a comparison of DG and HDG with TDG(1) is of interest, whereas TDG(2) is to be considered for comparison for second order formulations. For simplicity, we assume here that all element and facet unknowns of one element or facet couple with all unknowns of a corresponding facet or element (depending on the method) neighbor.

\begin{table}
    \pgfplotstabletypeset[
        col sep=comma, 
        columns={D,p,l2dofs,hdgdofs,svdtdofs1,svdtdofs2,l2nzes,hdgnzes,svdtnzes1,svdtnzes2},
        every nth row={2[+1]}{before row={\rowcolor[gray]{.875}}},
        every nth row={6}{before row=\midrule},
        columns/D/.style={column name=$d$},
        columns/p/.style={column name=$p$},
        columns/l2dofs/.style={int detect, column name=\ndofs~ DG,},
        columns/hdgdofs/.style={int detect, column name=HDG},
        columns/svdtdofs1/.style={int detect, column name=TDG(1)},
        columns/svdtdofs2/.style={int detect, column name=TDG(2)},
        columns/l2nzes/.style={int detect,  column name=\nnzes~ DG},
        columns/hdgnzes/.style={int detect, column name=HDG},
        columns/svdtnzes1/.style={int detect, column name=TDG(1)},
        columns/svdtnzes2/.style={int detect, column name=TDG(2)},
        column type=r,
        empty cells with={--}, 
        every head row/.style={before row=\toprule,after row=\midrule},
        every last row/.style={after row=\bottomrule}
     ]{results/compare_dofs.csv}
    
     \vspace*{0.5cm}\caption{Comparison of different measures (\ndofs, \nnzes) for the computational overhead for the solution of the largest global linear system related to the methods DG, Hybrid DG (after static condenstaion) and (embedded) Trefftz DG.}\vspace*{-0.5cm}  
     \label{tab:hdg}  
\end{table}  

We observe that for first order problems, the embedded Trefftz DG method even outperforms the HDG method. For second order problems HDG and embedded Trefftz DG methods are similar, in 2D HDG is cheaper, in 3D the embedded Trefftz DG method. However, optimized HDG formulations are able to reduce the polynomial degree on the element interfaces by one order in many cases without effecting the order of accuracy. In these cases the HDG method will still be cheaper. However, compared to the plain DG method the improvement of the embedded Trefftz DG method is already remarkable.

\section{Conclusion and possible extensions}\label{sec:outlook}
We have presented a method to reduce the matrix of a DG scheme post assembly, by projecting the polynomial basis onto Trefftz spaces. 
Several numerical examples have been presented, showing that the method matches the convergence rates of polynomial Trefftz spaces and showcasing its flexibility and potential in case of smooth coefficients and inhomogeneous equations.

While the Trefftz spaces used in the polynomial case are well understood, analytical properties of the space used for the embedding in the case of smooth coefficients and \rv{plane wave} Trefftz spaces will be addressed in a forthcoming paper.
One remarkable finding are the convincing results that have been obtained for the linear transport problem, a problem that is typically not associated with Trefftz methods.
Further, we saw that the method is more than an acceleration technique, as the restriction to a subspace can even improve stability as seen for the Helmholtz problem.
The flexibility of the approach suggests to consider the approach in many more and possibly more complex cases.

We focused on using the local operator and its kernel to emulate Trefftz spaces. 
However, the technique could also be applied to impose other constraints on the solution as an alternative to complicated constructions of special basis functions, weak impositions through the DG formulation or Lagrange multiplier techniques. 
Furthermore, the extraction done here for the kernel could similarly be applied for the range of certain differential operators or orthogonal (w.r.t. to a localizable inner product) complements.


One obvious drawback is the confinement to DG schemes.
It presents an alternative to other approaches that accelerate DG methods, such as HDG. 
However, using the method as a pure acceleration technique for DG methods may be less attractive, if sophisticated preconditioners and linear solvers for the DG discretization are available.
Moreover, the scaling of the computing time of the embedding construction in $p$ is not good, suggesting that really high polynomial degrees, e.g. beyond $p=10$, may not be feasible.
In these cases however, the method seems to at least offer a very convenient way to investigate Trefftz and Trefftz-type methods as a research tool. 

Another difficulty is, that combining the approach with (partially) conforming methods (such as those based on $H(\operatorname{div})$ or $H(\operatorname{curl})$ spaces) is not directly possible, \rv{due to the support of basis functions spanning multiple elements and overlapping partially the construction of a local embedding unclear. Additionally, the application of the PDE operator in strong form is only feasible element-wise.}

\phantomsection
\section*{Acknowledgments}
\addcontentsline{toc}{section}{Acknowledgments}
P. Stocker has been supported by the German Research Foundation (DFG) through grant 432680300 - SFB~1456.

\bibliographystyle{siam}
\bibliography{bib}

\begin{thebibliography}{10}

\bibitem{al2008method}
{\sc M.~J. Al-Khatib, K.~Grysa, and A.~Maci{\k{a}}g}, {\em The method of
  solving polynomials in the beam vibration problem}, J. Theoret. Appl. Mech.,
  46 (2008), pp.~347--366.

\bibitem{arnold2002unified}
{\sc D.~N. Arnold, F.~Brezzi, B.~Cockburn, and L.~D. Marini}, {\em Unified
  analysis of discontinuous {Galerkin} methods for elliptic problems}, {SIAM J.
  Numer. Anal.}, 39 (2002), pp.~1749--1779.

\bibitem{bgl2016}
{\sc L.~Banjai, E.~H. Georgoulis, and O.~Lijoka}, {\em A {T}refftz polynomial
  space-time discontinuous {G}alerkin method for the second order wave
  equation}, SIAM J. Numer. Anal., 55 (2017), pp.~63--86.

\bibitem{BMPS20}
{\sc P.~Bansal, A.~Moiola, I.~Perugia, and C.~Schwab}, {\em Space--time
  discontinuous {G}alerkin approximation of acoustic waves with point
  singularities}, IMA J. Numer. Anal., 41 (2021), pp.~2056--2109.

\bibitem{bcds20}
{\sc H.~Barucq, H.~Calandra, J.~Diaz, and E.~Shishenina}, {\em Space--time
  {T}refftz-{DG} approximation for elasto-acoustics}, Appl. Anal., 99 (2020),
  pp.~747--760.

\bibitem{buet2020trefftz}
{\sc C.~Buet, B.~Despr{\'e}s, and G.~Morel}, {\em Trefftz discontinuous
  {G}alerkin basis functions for a class of {F}riedrichs systems coming from
  linear transport}, Adv. Comput. Math., 46 (2020), pp.~1--27.

\bibitem{cessenat1998application}
{\sc O.~Cessenat and B.~Despr{\'e}s}, {\em Application of an ultra weak
  variational formulation of elliptic pdes to the two-dimensional helmholtz
  problem}, {SIAM J. Numer. Anal.}, 35 (1998), pp.~255--299.

\bibitem{cockburn2016static}
{\sc B.~Cockburn}, {\em Static condensation, hybridization, and the devising of
  the {HDG} methods}, in Building bridges: connections and challenges in modern
  approaches to numerical partial differential equations, Springer, 2016,
  pp.~129--177.

\bibitem{cockburn2009unified}
{\sc B.~Cockburn, J.~Gopalakrishnan, and R.~Lazarov}, {\em Unified
  hybridization of discontinuous {Galerkin}, mixed, and continuous {Galerkin}
  methods for second order elliptic problems}, {SIAM J. Numer. Anal.}, 47
  (2009), pp.~1319--1365.

\bibitem{10.1145/992200.992206}
{\sc T.~A. Davis}, {\em Algorithm 832: Umfpack v4.3---an unsymmetric-pattern
  multifrontal method}, ACM Trans. Math. Softw., 30 (2004), p.~196–199.

\bibitem{di2016review}
{\sc D.~A. Di~Pietro, A.~Ern, and S.~Lemaire}, {\em A review of hybrid
  high-order methods: formulations, computational aspects, comparison with
  other methods}, Building bridges: connections and challenges in modern
  approaches to numerical partial differential equations,  (2016),
  pp.~205--236.

\bibitem{EKSW15}
{\sc H.~Egger, F.~Kretzschmar, S.~M. Schnepp, and T.~Weiland}, {\em A
  {S}pace-{T}ime {D}iscontinuous {G}alerkin {T}refftz {M}ethod for {T}ime
  {D}ependent {M}axwell's {E}quations}, SIAM J. Sci. Comput., 37 (2015),
  pp.~B689--B711.

\bibitem{egger2008hybrid}
{\sc H.~Egger and J.~Sch{\"o}berl}, {\em A hybrid mixed discontinuous
  {Galerkin} method for convection-diffusion problems}, IMA J. Numer. Anal,
  (2008).

\bibitem{fure2020discontinuous}
{\sc H.~S. Fure, S.~Pernet, M.~Sirdey, and S.~Tordeux}, {\em A discontinuous
  galerkin trefftz type method for solving the two dimensional maxwell
  equations}, SN PDE, 1 (2020), pp.~1--25.

\bibitem{ghp09}
{\sc C.~J. {Gittelson}, R.~{Hiptmair}, and I.~{Perugia}}, {\em {Plane wave
  discontinuous Galerkin methods: Analysis of the $h$-version}}, {ESAIM, Math.
  Model. Numer. Anal.}, 43 (2009), pp.~297--331.

\bibitem{tdgschroedinger}
{\sc S.~Gómez and A.~Moiola}, {\em A space-time {T}refftz discontinuous
  {G}alerkin method for the linear {Schr\"odinger} equation}, arXiv preprint,
  arXiv:2106.04724,  (2021).

\bibitem{hmp11b}
{\sc R.~{Hiptmair}, A.~{Moiola}, and I.~{Perugia}}, {\em {Plane wave
  discontinuous Galerkin methods for the 2D Helmholtz equation: analysis of the
  $p$-version}}, {SIAM J. Numer. Anal.}, 49 (2011), pp.~264--284.

\bibitem{HMP11}
{\sc R.~Hiptmair, A.~Moiola, and I.~Perugia}, {\em Error analysis of
  {Trefftz-dis\-con\-tin\-u\-ous} {G}alerkin methods for the time-harmonic
  {M}axwell equations}, Math. Comp., 82 (2012), pp.~247--268.

\bibitem{TrefftzSurvey}
\leavevmode\vrule height 2pt depth -1.6pt width 23pt, {\em A survey of
  {T}refftz methods for the {H}elmholtz equation}, in Building Bridges:
  Connections and Challenges in Modern Approaches to Numerical Partial
  Differential Equations, G.~R. Barrenechea, F.~Brezzi, A.~Cangiani, and E.~H.
  Georgoulis, eds., Lect. Notes Comput. Sci. Eng., Springer, 2016.
\newblock pp. 237--278.

\bibitem{HMPS14}
{\sc R.~Hiptmair, A.~Moiola, I.~Perugia, and C.~Schwab}, {\em Approximation by
  harmonic polynomials in star-shaped domains and exponential convergence of
  {T}refftz {$hp$-dGFEM}}, ESAIM Math. Model. Num. Anal., 48 (2014),
  pp.~727--752.

\bibitem{Ho}
{\sc J.~{Horv\'ath}}, {\em {Basic sets of polynomial solutions for partial
  differential equations}}, {Proc. Am. Math. Soc.}, 9 (1958), pp.~569--575.

\bibitem{HuYuan18}
{\sc Q.~Hu and L.~Yuan}, {\em A plane wave method combined with local spectral
  elements for nonhomogeneous {H}elmholtz equation and time-harmonic {M}axwell
  equations}, Adv. Comput. Math., 44 (2018), pp.~245--275.

\bibitem{ImbertGeradDespres2014}
{\sc L.-M. Imbert-G{\'e}rard and B.~Despr{\'e}s}, {\em A generalized plane-wave
  numerical method for smooth nonconstant coefficients}, IMA J. Numer. Anal.,
  34 (2014), pp.~1072--1103.

\bibitem{qtrefftz}
{\sc L.-M. Imbert-Gérard, A.~Moiola, and P.~Stocker}, {\em A space-time
  quasi-{T}refftz {DG} method for the wave equation with piecewise-smooth
  coefficients}, arXiv preprint, arXiv:2011.04617,  (2021).

\bibitem{MiWi56}
{\sc E.~P. jun. {Miles} and E.~{Williams}}, {\em {The Cauchy problem for linear
  partial differential equations with restricted boundary conditions}}, {Can.
  J. Math.}, 8 (1956), pp.~426--431.

\bibitem{kk95}
{\sc E.~Kita and N.~Kamiya}, {\em Trefftz method: an overview}, Adv. Eng.
  Softw., 24 (1995), pp.~3--12.

\bibitem{kretzschmarphd}
{\sc F.~Kretzschmar}, {\em {The discontinuous Galerkin Trefftz method}}, PhD
  thesis, Technische Universit{\"a}t Darmstadt, 2015.

\bibitem{SpaceTimeTDG}
{\sc F.~Kretzschmar, A.~Moiola, I.~Perugia, and S.~M. Schnepp}, {\em {A priori
  error analysis of space-time Trefftz discontinuous Galerkin methods for wave
  problems}}, IMA J. Numer. Anal., 36 (2016), pp.~1599--1635.

\bibitem{KSTW2014}
{\sc F.~Kretzschmar, S.~M. Schnepp, I.~Tsukerman, and T.~Weiland}, {\em
  {Discontinuous Galerkin methods with Trefftz approximations}}, J. Comput.
  Appl. Math., 270 (2014), pp.~211--222.

\bibitem{L_MTH_2010}
{\sc C.~Lehrenfeld}, {\em {Hybrid Discontinuous Galerkin Methods for
  Incompressible Flow Problems}}, Master's thesis, RWTH Aachen, May 2010.

\bibitem{LS_CMAME_2016}
{\sc C.~Lehrenfeld and J.~Schöberl}, {\em High order exactly divergence-free
  hybrid discontinuous {Galerkin} methods for unsteady incompressible flows},
  Computer Methods in Applied Mechanics and Engineering, 307 (2016), pp.~339 --
  361.

\bibitem{LiShu2012}
{\sc F.~{Li}}, {\em {On the negative-order norm accuracy of a
  local-structure-preserving LDG method}}, {J. Sci. Comput.}, 51 (2012),
  pp.~213--223.

\bibitem{LiShu2006}
{\sc F.~{Li} and C.-W. {Shu}}, {\em {A local-structure-preserving local
  discontinuous Galerkin method for the Laplace equation}}, {Methods Appl.
  Anal.}, 13 (2006), pp.~215--234.

\bibitem{LLHC08}
{\sc Z.-C. Li, T.-T. Lu, H.-Y. Hu, and A.~H.-D. Cheng}, {\em Trefftz and
  collocation methods}, WIT Press, Southampton, 2008.

\bibitem{macikag2011trefftz}
{\sc A.~Maci{\k{a}}g}, {\em Trefftz functions for a plate vibration problem},
  J. Theoret. Appl. Mech., 49 (2011), pp.~97--116.

\bibitem{macig2005solution}
{\sc A.~Maci{\k{a}}g and J.~Wauer}, {\em Solution of the two-dimensional wave
  equation by using wave polynomials}, J. Engrg. Math., 51 (2005),
  pp.~339--350.

\bibitem{zbMATH06677366}
{\sc A.~{Maci\k{a}g}}, {\em {Solution of the three-dimension wave equation by
  using wave polynomials}}, {PAMM, Proc. Appl. Math. Mech.}, 4 (2004),
  pp.~706--707.

\bibitem{zbMATH02190564}
{\sc A.~{Maci\k{a}g} and J.~{Wauer}}, {\em {Solution of the two-dimensional
  wave equation by using wave polynomials}}, {J. Eng. Math.}, 51 (2005),
  pp.~339--350.

\bibitem{zbMATH06996162}
{\sc L.~Mascotto, I.~Perugia, and A.~Pichler}, {\em Non-conforming harmonic
  virtual element method: {{$h$}}- and {{$p$}}-versions}, J. Sci. Comput., 77
  (2018), pp.~1874--1908.

\bibitem{zbMATH07139237}
\leavevmode\vrule height 2pt depth -1.6pt width 23pt, {\em A nonconforming
  {Trefftz} virtual element method for the {Helmholtz} problem}, Math. Models
  Methods Appl. Sci., 29 (2019), pp.~1619--1656.

\bibitem{mps13}
{\sc J.~M. {Melenk}, A.~{Parsania}, and S.~{Sauter}}, {\em {General DG-methods
  for highly indefinite Helmholtz problems}}, {J. Sci. Comput.}, 57 (2013),
  pp.~536--581.

\bibitem{MiWi55a}
{\sc E.~P. Miles, Jr. and E.~Williams}, {\em A basic set of homogeneous
  harmonic polynomials in {$k$} variables}, Proc. Amer. Math. Soc., 6 (1955),
  pp.~191--194.

\bibitem{MiWi55b}
{\sc E.~P. Miles, Jr. and E.~Williams}, {\em A note on basic sets of
  homogeneous harmonic polynomials}, Proc. Amer. Math. Soc., 6 (1955),
  pp.~769--770.

\bibitem{AndreaPhD}
{\sc A.~Moiola}, {\em Trefftz-discontinuous {G}alerkin methods for
  time-harmonic wave problems}, PhD thesis, Seminar for applied mathematics,
  ETH Z\"urich, 2011.

\bibitem{moiola2013plane}
{\sc A.~Moiola}, {\em Plane wave approximation in linear elasticity}, Appl.
  Anal., 92 (2013), pp.~1299--1307.

\bibitem{mope18}
{\sc A.~Moiola and I.~Perugia}, {\em A space--time {T}refftz discontinuous
  {G}alerkin method for the acoustic wave equation in first-order formulation},
  Numer. Math., 138 (2018), pp.~389--435.

\bibitem{morel2018trefftz}
{\sc G.~Morel, C.~Buet, and B.~Despr{\'e}s}, {\em Trefftz discontinuous
  {G}alerkin method for {F}riedrichs systems with linear relaxation:
  application to the {P1} model}, Comput. Methods Appl. Math., 18 (2018),
  pp.~521--557.

\bibitem{oikawa2015hybridized}
{\sc I.~Oikawa}, {\em A hybridized discontinuous galerkin method with reduced
  stabilization}, Journal of Scientific Computing, 65 (2015), pp.~327--340.

\bibitem{zbMATH06596743}
{\sc I.~Perugia, P.~Pietra, and A.~Russo}, {\em A plane wave virtual element
  method for the {Helmholtz} problem}, ESAIM, Math. Model. Numer. Anal., 50
  (2016), pp.~783--808.

\bibitem{StockerSchoeberl}
{\sc I.~Perugia, J.~Sch\"{o}berl, P.~Stocker, and C.~Wintersteiger}, {\em Tent
  pitching and {T}refftz-{DG} method for the acoustic wave equation}, Comput.
  Math. Appl., 79 (2020), pp.~2987--3000.

\bibitem{PFT09}
{\sc S.~Petersen, C.~Farhat, and R.~Tezaur}, {\em A space-time discontinuous
  {G}alerkin method for the solution of the wave equation in the time domain},
  Internat. J. Numer. Methods Engrg., 78 (2009), pp.~275--295.

\bibitem{Qin05}
{\sc Q.-H. Qin}, {\em Trefftz finite element method and its applications},
  Appl. Mech. Rev., 58 (2005), pp.~316--337.

\bibitem{RoWi}
{\sc P.~C. {Rosenbloom} and D.~V. {Widder}}, {\em {Expansions in terms of heat
  polynomials and associated functions}}, {Trans. Am. Math. Soc.}, 92 (1959),
  pp.~220--266.

\bibitem{Stocker2022}
{\sc P.~Stocker}, {\em `ngstrefftz`: Add-on to ngsolve for trefftz methods},
  Journal of Open Source Software, 7 (2022), p.~4135.

\bibitem{trefftz1926}
{\sc E.~Trefftz}, {\em {Ein Gegenst{\"u}ck zum Ritzschen Verfahren}}, Proc. 2nd
  Int. Cong. Appl. Mech., Zurich, 1926,  (1926), pp.~131--137.

\bibitem{zbMATH02139293}
{\sc A.~{U\'sci{\l}owska-Gajda}, J.~A. {Ko{\l}odziej}, M.~{Cia{\l}kowski}, and
  A.~{Fr\k{a}ckowiak}}, {\em {Comparison of two types of Trefftz method for the
  solution of inhomogeneous elliptic problems}}, {Comput. Assist. Mech. Eng.
  Sci.}, 10 (2003), pp.~661--675.

\bibitem{yang2020trefftz}
{\sc J.~Yang, M.~Potier-Ferry, K.~Akpama, H.~Hu, Y.~Koutsawa, H.~Tian, and
  D.~S. Z{\'e}z{\'e}}, {\em Trefftz methods and {T}aylor series}, Arch. Comput.
  Methods Eng., 27 (2020), pp.~673--690.

\end{thebibliography}

\end{document}